\documentclass[12pt]{article}
\usepackage{hyperref}
\usepackage{makeidx}
\usepackage{amsfonts}
\usepackage{amsmath}
\usepackage{amssymb}
\usepackage{color,graphicx}
\usepackage{amsopn}
\usepackage{amsthm}\setcounter{MaxMatrixCols}{10}
\setlength{\evensidemargin}{0in}
\setlength{\oddsidemargin}{0in}
\setlength{\textwidth}{6.25in}
\setlength{\textheight}{8.5in}
\setlength{\topmargin}{0in}
\setlength{\headheight}{0in}
\setlength{\headsep}{0in}
\setlength{\itemsep}{0pt}

\setlength{\parskip}{\smallskipamount}
\newtheorem{thm}{Theorem}[section]
\newtheorem{lem}[thm]{Lemma}

\newtheorem{cor}[thm]{Corollary}

\theoremstyle{definition}
\newtheorem{df}[thm]{Definition}
\newtheorem{dfn}[thm]{Definition}

\theoremstyle{remark}

\renewcommand{\t}{\tau}
\renewcommand{\geq}{\geqslant}
\renewcommand{\leq}{\leqslant}
\newcommand{\om}{\mathbb{N}}

\newcommand{\rca}{RCA$_0$ }
\newcommand{\pc}{$\Pi^0_1$ class }

\newcommand{\p}{$\Pi^0_1$ }

\newcommand{\la}{\langle}
\newcommand{\ra}{\rangle}
\newcommand{\nin}{\not\in}
\newcommand{\A}{\forall}
\newcommand{\E}{\exists}

\newcommand{\subs}{\subseteq}
\newcommand{\sups}{\supseteq}

\newcommand{\s}{\sigma}
\newcommand{\emp}{\emptyset}

\renewcommand{\inf}{\infty}
\newcommand{\halts}{\!\downarrow}

\newcommand{\g}{\gamma}
\newcommand{\ext}{\mathrm{Ext}}

\newcommand{\N}{\mathbb{N}}
\newcommand{\br}{\text{Br}}

\renewcommand{\implies}{\rightarrow}

\begin{document}
	\title{Finding paths through narrow and wide trees}

	\author{Stephen Binns\\Bj{\o}rn Kjos-Hanssen \thanks{Partially supported as co-PI by NSF grant DMS-0652669.}}
	\maketitle

	\begin{abstract}
		We consider two axioms of second-order arithmetic. These axioms assert, in two different ways, that \emph{infinite but narrow} binary trees always have infinite paths.
		We show that both axioms are strictly weaker than Weak K\"onig's Lemma, and incomparable in strength to the dual statement (WWKL) that \emph{wide} binary trees have paths. 
	\end{abstract}

	\tableofcontents

	\section{Introduction}

		We investigate here two new subsystems of second-order arithmetic and compare their logical strengths to those of known systems. We are concerned in particular with subsystems that are strictly weaker than Weak K\"onig's Lemma (WKL) and, more specifically, those consisting of axioms that dictate the existence of infinite paths through binary trees. Of course any axiom that implies the existence of at least one path through \emph{every} infinite binary tree is at least as strong as WKL, but we will apply such path-existence axioms to restricted sets of trees. This direction of inquiry is informed by work done by Simpson \cite{simp}, Giusto, Brown \cite{bgs} and others on the axiom  WWKL, Weak Weak K\"onig's Lemma. WWKL states that every tree of positive measure (defined in terms of first-order properties of the tree) has a path. This is shown to be strictly weaker than WKL and significant reversals were established to theorems of analysis. The fact that WWKL is weaker than WKL appeals to the intuition as one feels that it should be easier to find a path through a tree that has {\em many} paths in some sense. Indeed if one chooses {\em left} or {\em right} at random one always has a nonvanishing probability of finding a path through such a tree. It is perhaps slightly paradoxical that an opposite heuristic is also  applicable. That is, if a tree has few paths in some sense, then it is also relatively easy to find a path. If one imagines that one is at an infinitely extendible node $\s$ on some infinite binary tree and having to make a decision of whether to proceed to the left or right, one can wait until it becomes apparent that the tree above $\s0$ or the tree above $\s1$ is finite (we are assuming here that the set of nodes on the tree  is known). If $\s i$ is finite, we take the path through $\s(1-i)$. This strategy will work in allowing us to decide on an infinitely extendible extension of $\s$ if and only if there is there is only one such  extension - that is if $\s$ is not a {\em branching node} of $T$. The intuition behind our weakening of WKL is that it is easier to find paths through trees that have a  small set of branching nodes - again in some particular sense of {\em small}.

		This leaves just the question about the appropriate definition of {\em small}. There are of course some obvious candidates. One is that the set of branching nodes is finite; another is that the set of branching nodes has a maximal (in the sense of extension) element. In both situations the above strategy of waiting until it becomes clear which direction to take will succeed in finding a path if one starts at this maximal branching node (or at the root of the tree if no branching nodes exist). However these notions of {\em smallness} are too strong in the sense that in both cases \rca proves that every tree with a small set of branching nodes has a path. We thus weaken these notions to give us principles that are strictly stronger than RCA$_0$. We do this with the help of {\em bounding witnesses}.

		Suppose $\Phi(n,m)$ is a formula in second order arithmetic, with free variables $n,m$. We say $f$ is a {\em strong bounding witness} for $\Phi$ if
		\[
			\A n \E m\leq f(n) \Phi(n,m).
		\]
		And we say $f$ is a {\em weak bounding witness} for $\Phi$ if
		\[
			\E^\infty n\E m\leq f(n) \Phi(n,m).
		\]
		The revised concepts of {\em smallness} that we use in the paper are as follows:
		\begin{enumerate}
			\item the set of branching nodes $\br(T)$ is {\em small} if there is no weak witness for the predicate
			\[
				\Phi_1(n,\s)\equiv[\s\in\br(T) \wedge \s\geq n];
			\]
			\item the set of branching  nodes is {\em small} if there is no strong witness for the predicate
			\[
				\Phi_2(\s,\t)\equiv[\s\in \ext(T)\implies(\t\in\br(T)\wedge \t\sups \s)],
			\]
			where $\ext(T)$ is the set of nodes on $T$ with infinitely many extensions.
		\end{enumerate}
		For any infinite binary tree $T$, $\A n\E \s\Phi_1(n,\s)$ is true if and only if $T$ has infinitely many branching nodes, and $\A\s\E\t\Phi_2(\s,\t)$ is true if and only if $T$ is perfect. The two axioms that these give rise to are:
		\begin{enumerate}
			\item VSMALL: If $T$ is an infinite binary tree and  there is no weak bounding witness for the predicate $\Phi_1$, then $T$ has a path.
			\item DIM: If $T$ is an infinite binary tree $T$ and there is no strong bounding witness for the predicate $\Phi_2$, then $T$ has a path.
		\end{enumerate}
		In the next section we give slightly different but equivalent versions of these two axioms. DIM is evidently stronger than VSMALL as if a tree has a strong bounding witness for its perfection, the same witness serves as a weak bounding witness for the infinitude of its set of branching nodes. Our main result is that both DIM and VSMALL are independent of WWKL and the DNR axiom.

		The DNR axiom is related to the simplest way of producing a non-computable function - diagonalisation. Take a computable enumeration of the partial computable functions $\la\varphi_e\ra$. Let $f:\N\rightarrow\N$ be such that  $f(e)\neq \varphi_e(e)$ for all $e$. DNR is the principle that asserts the existence of such a function.

		In \cite{AKLS} it is shown  that DNR is a strictly weaker principle than WWKL, but is still non-constructive in the sense that there are models of \rca in which it fails. In fact it is a general rule that any principle which implies the existence of a non-computable set or function is non-constructive in this sense. This is so because there is a model of \rca all of whose sets are computable. We show that our axioms are independent of both DNR and WWKL.

		We follow standard notation. $\om$ is the set of natural numbers, $2^\om$ is the powerset of $\om$ with standard topology and measure. $\varphi_e$ is the $e^{\text{th}}$ partial recursive function, and $\la . , . \ra$ is a fixed  computable bijection between $\om$ and $\om\times \om$.   We will identify subsets of $\om$ with their characteristic functions without comment. Lowercase Greek letters $\s$, $\t$, $\g$ and so on will be used to denote elements of $2^{<\om}$ and upper case Roman letters $X$, $Y$ and $Z$ will usually denote elements of $2^\om$ or, equivalently, subsets of $\N$. The letters $f$, $g$ and $h$ will be used to denote functions from $\om$ to $\om$, that is for elements of $\om^\om$. $\s\subset X$ or $\s\subs\t$  expresses the fact that the infinite binary sequence $X$ or string $\t$ extends $\s$. $\s i$ is the concatenation of  $\s$ and $\la i\ra $.  A {\em tree} will be a subset of $2^{<\om}$, with $\subs$ as its partial order relation.  The root of all trees will be the empty string $\lambda$. A {\em path} through a tree $T$ is the union of an infinite maximal linearly ordered subset of $T$.  The set of paths through $T$ is denoted $[T]$. The cardinality of a set $D$ is denoted $||D||$.

	\section{The axiom systems VSMALL and DIM}

		It is easy to prove that any infinite binary tree with only finitely many branching nodes must have a path. In fact one can prove, just from the axiom system RCA$_0$, that any infinite binary tree with a maximal branching node must have a path (such a path is $\Delta^0_1$-definable).   In this paper we consider infinite binary trees whose sets of branching nodes are small in some sense.  Translating  into the language of second order arithmetic we describe axioms  which state that if an infinite binary tree has only a small set of branching nodes, then the tree must have a path.

		\begin{df}
			If $T$ is a binary tree, then  $s\in T$ is an {\em extendible node of $T$} if the set
			$\{\t\in T: \t\sups\s\}$ is infinite. The set of extendible nodes of $T$ is denoted $\ext(T)$.
			If $P=[T]$, then by $\ext(P)$ we mean $\ext(T)$. $\s\in T$ is a {\em branching node of $T$} if both $\s0$ and $\s1$ are extendible nodes. The set of branching nodes of $T$ is denoted $\br(T)$, and similarly for $\br(P)$ if $P=[T]$.
		\end{df}

		The two axioms we consider are:

		\begin{quote}
			VSMALL:  If $T$ is an infinite binary tree with the property that there is no function $f:\om\rightarrow \om$ such that
			\[
				\E^\inf n \E\s [f(n) \leq|\s|< f(n+1)\wedge \s\in\br(T)]
			\]
			then $T$ has a path.
		\end{quote}

		VSMALL is evidently a strengthening of the statement, provable in RCA$_0$, that every infinite tree with finitely many branching nodes has an infinite path.
		The second axiom deals with perfect trees. A tree is {\em perfect} if every extendible node of $T$ is extended by a branching node.  The second axiom we look at captures the notion of a  witness to the perfection of a tree.

		\begin{quote}
			DIM: If $T$ is an infinite binary tree with the property that  there is no function $f: \om\rightarrow \om$ such that,
			\[
				\A \s\in\ext(T) \E \t\in\br(T) [ \t\sups\s \wedge |\t|\leq f(|\s|)],
			\]
			then $T$ has a path.
		\end{quote}

		If such an $f$ as referred to in DIM existed, it would also serve to contradict VSMALL. Thus DIM+\rca is sufficient to prove VSMALL. We show in the following sections that DIM+\rca is not sufficient to prove DNR, and that WWKL+\rca is not sufficient to prove VSMALL. Consequently, both DIM and VSMALL are independent of both WWKL and DNR. Finally we prove that VSMALL+\rca is not sufficient to prove DIM.

	\section{$\text{ \rca+WWKL}$ $\not\vdash \text{VSMALL}$}

		The following four lemmas are well-known or are easy extensions of the cited results.

		\begin{dfn}
			If $X=\{x_0<x_1<x_2\dots\}\subs\om$, then {\em the principal function of $Y$} $p_Y$ is the function $n\mapsto x_n$.
		\end{dfn}

		\begin{lem}[\cite{cgm} Theorem 1.2] \label{aedom}
			There exists a c.e.~set $A$ such that for almost all $X\in 2^\om$, and for all $Y\leq_T X$, the principal function of $\om\smallsetminus A$ dominates $Y.$
		\end{lem}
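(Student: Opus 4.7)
The plan is to construct $A$ as a c.e.\ set whose complement $\bar A = \{a_0 < a_1 < \cdots\}$ is so sparse that its principal function $p_{\bar A}\colon n \mapsto a_n$ dominates every $X$-computable function for almost every $X$. Since any $Y \leq_T X$ has the form $\Phi_e^X$ for some Turing functional $\Phi_e$, and a countable union of null sets is null, it suffices to ensure that for each $e$, $\mu\{X : \Phi_e^X \text{ is total and not dominated by } p_{\bar A}\} = 0$.

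The driving tool is the first Borel--Cantelli lemma. Setting $G_{e,n}(v) := \mu\{X : \Phi_e^X(n)\halts \text{ with value} > v\}$, the pairwise disjoint events $\{X : \Phi_e^X(n) = v\}$ (over $v$) have total measure at most $1$, so $G_{e,n}(v) \to 0$ as $v \to \infty$. It therefore suffices to arrange $G_{e,n}(a_n) \leq 2^{-n-e-2}$ for every $e \leq n$: then $\sum_n G_{e,n}(a_n) < \infty$ for each fixed $e$, and Borel--Cantelli yields $\mu\{X : \Phi_e^X(n) > a_n \text{ infinitely often}\} = 0$.

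To realize this effectively with $A$ c.e., I would use a movable-markers construction. Begin with $\gamma_n = n$. At stage $s$, process $\gamma_0, \gamma_1, \ldots, \gamma_s$ in order: for each $n$, compute the stage-$s$ approximation $G_{e,n,s}(\gamma_n)$ (obtained from the finitely many cylinders on which $\Phi_{e,s}^X(n)$ has converged) for every $e \leq n$, and bump $\gamma_n$ up to any value that brings all these approximations below $2^{-n-e-2}$; then bump $\gamma_{n+1}$ up if necessary to preserve $\gamma_n < \gamma_{n+1}$, and continue. Each vacated value is enumerated into $A$. Since $G_{e,n,s}(v) \nearrow G_{e,n}(v)$, once $\gamma_n$ exceeds the finite threshold $V_n := \min\{v : (\forall e \leq n)\, G_{e,n}(v) < 2^{-n-e-2}\}$, the approximation stays permanently below threshold and $\gamma_n$ is never bumped again. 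By induction on $n$ every $\gamma_n$ stabilizes, so $\bar A = \{\gamma_0^{\infty} < \gamma_1^{\infty} < \cdots\}$ and $p_{\bar A}(n) = \gamma_n^{\infty} \geq V_n$ satisfies the required Borel--Cantelli bounds.

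The main obstacle is that the true measures $G_{e,n}(v)$ are only left-c.e., not computable, so we cannot directly solve for the thresholds $V_n$ in advance. The movable-markers device sidesteps this by using only the computable approximations $G_{e,n,s}$ at each stage and relying on monotone convergence to guarantee eventual stabilization; the crux of the verification is the finiteness of $V_n$, which bounds the number of direct bumps to $\gamma_n$ and, together with induction on $n$, also bounds the number of cascading bumps propagated from smaller-indexed markers.
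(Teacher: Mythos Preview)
The paper does not give its own proof of this lemma; it is quoted directly as Theorem~1.2 of \cite{cgm} under the rubric ``the following four lemmas are well-known or are easy extensions of the cited results,'' so there is nothing in the paper to compare your argument against. Your sketch is correct and is essentially the construction carried out in \cite{cgm}: the Borel--Cantelli reduction via the tail measures $G_{e,n}(v)=\mu\{X:\Phi_e^X(n)\!\downarrow>v\}$, together with a movable-markers enumeration of $A$ that pushes each $p_{\om\smallsetminus A}(n)$ past the thresholds needed to make $\sum_n G_{e,n}(p_{\om\smallsetminus A}(n))<\infty$ for every $e$, is exactly their approach, and your handling of convergence (finiteness of $V_n$ for direct bumps, induction on $n$ for cascading bumps) is the standard verification.
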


		\begin{lem}[\cite{js} Theorem  5.3]\label{sep}
			A \pc is a {\em separating class} if it is of the form
			\[
				\{X\in2^\om: \A n[(n\in A\implies n\in X) \wedge (n\in B\implies n\nin X)]\}
			\]
			for some disjoint c.e.~sets $A$ and $B$.
			If $S\subs 2^\om$ is a \p separating  class with no computable element, then the set $\{X\!\in\! 2^\om: \E Y\!\in\!S\  X\geq_T Y\}$ has measure zero.
		\end{lem}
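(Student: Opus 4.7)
The plan is to prove the contrapositive: assuming that $\{X:\E Y\in S\ X\geq_T Y\}$ has positive measure, I construct a computable separator of $A$ and $B$, contradicting the hypothesis that $S$ has no computable element. Writing the set in question as $\bigcup_e F_e$ with $F_e=\{X:\Phi_e^X\in S\}$, countable subadditivity supplies an index $e$ with $\mu(F_e)>0$. Fix any rational $\epsilon<1/3$; the Lebesgue density theorem then provides a string $\t$ with $\mu([\t]\cap F_e)>(1-\epsilon)\mu([\t])$, where $[\t]$ denotes the basic clopen cylinder above $\t$.

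Set $\g=(1-\epsilon)\mu([\t])/2$, a positive rational, and for each $n$ and $i\in\{0,1\}$ let $U_n^i=\{X\in[\t]:\Phi_e^X(n)=i\}$. These are uniformly c.e.\ open sets, so $\mu(U_n^i)$ is left-c.e.\ uniformly in $n,i$. I define the candidate separator $C$ by the following algorithm: on input $n$, enumerate approximations from below of $\mu(U_n^0)$ and $\mu(U_n^1)$ in parallel, and output $i$ as soon as the approximation of $\mu(U_n^i)$ exceeds $\g$. The algorithm halts on every $n$ because $X\in F_e$ forces $\Phi_e^X$ to be total, so $[\t]\cap F_e\subseteq U_n^0\cup U_n^1$, giving $\mu(U_n^0)+\mu(U_n^1)>(1-\epsilon)\mu([\t])=2\g$ and hence at least one measure strictly exceeds $\g$. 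To see that $C$ separates, take $n\in A$: every $X\in[\t]\cap F_e$ has $\Phi_e^X(n)=1$, so $\mu(U_n^1)>2\g$, while $U_n^0\subseteq[\t]\setminus F_e$ yields $\mu(U_n^0)<\epsilon\mu([\t])<\g$ -- the last inequality is precisely $\epsilon<1/3$. Hence the approximation of $\mu(U_n^0)$ never crosses $\g$ and the algorithm outputs $1$; the case $n\in B$ is symmetric.

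The delicate point in all of this is the choice of threshold. It has to be small enough that for \emph{every} $n$ one of $\mu(U_n^0),\mu(U_n^1)$ exceeds it -- ensuring termination even for $n\notin A\cup B$, where neither measure need be close to $\mu([\t])$ -- but simultaneously large enough that on $A\cup B$ the ``wrong'' measure stays strictly below it so that $C$ genuinely separates. The value $\g=(1-\epsilon)\mu([\t])/2$ with $\epsilon<1/3$ satisfies both constraints simultaneously, and this is essentially the only place where the density parameter $\epsilon$ is used.
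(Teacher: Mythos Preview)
Your argument is correct and is essentially the classical majority-vote proof from Jockusch--Soare: pick a functional $\Phi_e$ whose success set has positive measure, zoom into a cylinder of high relative density, and decide $C(n)$ by waiting for one of the two c.e.\ open sets $U_n^0,U_n^1$ to accumulate enough measure. The threshold analysis with $\epsilon<1/3$ is exactly the point that makes the procedure both total and separating.

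The paper itself does not give a proof of this lemma; it simply quotes the result as Theorem~5.3 of \cite{js}. So there is no ``paper's proof'' to compare against beyond the original source, and your write-up reproduces that standard argument faithfully. One cosmetic remark: you might add a word on why $F_e$ is measurable (it is a $G_\delta$ inside $[\t]$ once you intersect with the $\Pi^0_1$ class $S$ and the totality condition), but this is routine and does not affect the validity of the proof.
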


		\begin{lem}[\cite{kucera85}]\label{1rand}
			The set of 1-random reals has measure 1.
		\end{lem}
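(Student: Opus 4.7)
The plan is to unpack the definition of 1-randomness as failure to be covered by any Martin--L\"of test, and then estimate the measure of the non-1-random reals by a standard countable union argument.

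First I would recall that a Martin--L\"of test is a uniformly c.e.\ sequence $\{U_n\}_{n\in\om}$ of $\Sigma^0_1$ subclasses of $2^\om$ with $\mu(U_n)\leq 2^{-n}$, and that $X\in 2^\om$ is $1$-random precisely when $X\nin \Inter_n U_n$ for every such test. The set of non-1-random reals is therefore
\[
    \mc N \;=\; \Union_{e\in\om} \Inter_n U^{(e)}_n,
\]
where $\{U^{(e)}_n\}_{n\in\om}$ is an effective enumeration of all Martin--L\"of tests (obtained, for example, by taking an arbitrary c.e.\ sequence of $\Sigma^0_1$ classes and truncating the $n^{\text{th}}$ member at the first stage where its measure would exceed $2^{-n}$; this truncation is uniformly computable in the index and preserves any genuine ML-test).

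Next I would bound the measure of a single $\Inter_n U^{(e)}_n$: since $\mu(U^{(e)}_n)\leq 2^{-n}$ for each $n$ and the intersection is contained in every $U^{(e)}_n$, we have $\mu(\Inter_n U^{(e)}_n)\leq 2^{-n}$ for all $n$, hence $\mu(\Inter_n U^{(e)}_n)=0$. Countable subadditivity of Lebesgue measure then yields $\mu(\mc N) = 0$, so the set of $1$-random reals has measure $1$.

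There is no real obstacle here beyond verifying that a universal effective enumeration of ML-tests exists, which is the only mildly delicate point; the rest is a routine application of countable subadditivity. If a fully rigorous treatment is desired, one would additionally construct a \emph{universal} ML-test $\{V_n\}$ with $X$ random iff $X\nin\Inter_n V_n$, so that $\mc N=\Inter_n V_n$ is itself a single null $G_\delta$ set, but for the measure-one conclusion the direct countable union bound is sufficient.
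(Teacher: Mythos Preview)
Your argument is correct: it is the standard proof that the complement of the $1$-randoms is a countable union of measure-zero $G_\delta$ sets. The paper does not supply its own proof of this lemma; it is listed among four results described as ``well-known or \dots\ easy extensions of the cited results'' and is simply attributed to Ku\v{c}era, so your proposal fills in exactly what the paper omits.
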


		\begin{df}
			$f\in\om^\om$ is {\em of hyperimmune-free degree} if for all $g\in\om^\om$ such that $g\leq_T f$, $g$ is dominated by a computable function.
			If $h\in\om^\om$, then $f$ is {\em of hyperimmune-free degree relative to $h$} if
			for all $g\in\om^\om$ such that $g\leq_T f$, $g$ is dominated by an $h$-computable function.
		\end{df}

		\begin{lem}[\cite{js}]\label{relhif}
			If $f\in\om^\om$ and if $T$ is an infinite binary  $f$-computable tree, then there is a $g\in [T]$ such that $g\oplus f$ is of hyperimmune-free degree relative to $f$.
			If $f$ is of hyperimmune-free degree, then $g\oplus f$ is of hyperimmune-free degree.
		\end{lem}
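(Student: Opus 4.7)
The plan is to prove this as the relativized hyperimmune-free basis theorem applied to the $\Pi^0_1(f)$-class $[T]$. I would build a nested sequence $T=T_0 \supseteq T_1 \supseteq T_2 \supseteq \cdots$ of infinite binary $f$-computable trees, indexed by an enumeration of the Turing functionals $\Phi_e^{f \oplus \cdot}$, so that any $g \in \bigcap_e [T_e]$ witnesses the conclusion.

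At stage $e+1$, given an infinite $f$-computable $T_e$, I would split into cases on $\Phi_e^{f \oplus \cdot}$. If for some $n$ the $\Pi^0_1(f)$-class $[T_e] \cap \{X : \Phi_e^{f \oplus X}(n)\uparrow\}$ is nonempty, then take $T_{e+1}$ to be the $f$-computable subtree of $T_e$ whose path set is exactly this class; any path $g$ then makes $\Phi_e^{f\oplus g}$ non-total. Otherwise, for every $n$ the $\Sigma^0_1(f)$ open set $\{X : \Phi_e^{f\oplus X}(n)\halts\}$ covers the closed set $[T_e]$, so by compactness there is a finite subcover by basic cylinders $[\sigma^n_1],\ldots,[\sigma^n_{k_n}]$ of strings extending nodes of $T_e$ on which $\Phi_e^{f \oplus \sigma^n_i}(n)\halts$. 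Searching $f$-effectively for such a subcover yields an $f$-computable function $h(n)=\max_i \Phi_e^{f \oplus \sigma^n_i}(n)$ which dominates $\Phi_e^{f\oplus g}(n)$ for every $g \in [T_e]$; set $T_{e+1}=T_e$.

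By K\"onig's Lemma $\bigcap_e [T_e]$ is nonempty; pick any $g$ in it. Every function Turing-reducible to $g\oplus f$ is realized as some $\Phi_e^{f\oplus g}$, and by construction any such function that happens to be total is dominated by an $f$-computable function. This gives the first statement. For the second, if $f$ is of hyperimmune-free degree then every $f$-computable function is dominated by a computable function, so chaining the two dominations shows that every total function $\leq_T g\oplus f$ is dominated by a computable function, and hence $g \oplus f$ is of hyperimmune-free degree.

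The main obstacle I anticipate is the effectivity of the compactness step: the finite subcover, and hence $h$, must be produced $f$-computably rather than merely proved to exist. This uses that $T_e$ is $f$-computable so that the basic cylinders compatible with $T_e$ form an $f$-c.e.\ family, and that the convergence of $\Phi_e^{f \oplus \sigma}(n)$ is itself $f$-c.e.; the $f$-effective search for the cover then terminates precisely because compactness guarantees one exists. Everything else is the standard Jockusch--Soare basis argument relativized uniformly to $f$.
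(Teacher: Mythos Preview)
Your proposal is correct and follows exactly the approach the paper has in mind: the paper's entire proof is the single sentence ``This is just the hyperimmune basis theorem of \cite{js} relativised to $f$,'' and what you have written is precisely a careful sketch of that relativized Jockusch--Soare argument, together with the chaining-of-dominations observation for the second clause. Your discussion of the $f$-effective compactness search is accurate and addresses the only nontrivial point.
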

		\begin{proof}
			This is just the hyperimmune basis theorem of \cite{js} relativised to $f$.
		\end{proof}

		\begin{df}\label{vsmalldef}
			An infinite binary tree $T$ is {\em very small} if the principal function of its set of branching nodes $\br(T)$ dominates every computable function.
			$P=[T]$ is {\em very small} if $T$ is.
		\end{df}
		If $T$ is an infinite binary tree, then define the {\em branching level set of $T$} to be the set $\{|\t|:  \t\in\br(T)\}$. The principal function of this set we call the {\em branching level function of $T$}. It is proved in \cite{binns-thesis} that an infinite computable binary tree is very small if and only if its branching level function dominates every computable function.  We use this  in the following result.

		\begin{thm}\label{wwklnotvsmall}
			$\text{ \rca+WWKL}$ $\not\vdash \text{VSMALL}$
		\end{thm}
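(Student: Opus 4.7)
The plan is to build an $\omega$-model $\mathcal{S}$ of RCA$_0$+WWKL in which VSMALL fails, the failure being witnessed by a single computable very small tree $T_0$ having no path in $\mathcal{S}$. The three lemmas just stated supply the ingredients: Lemma~\ref{aedom} will make $T_0$ very small, Lemma~\ref{sep} will force paths through $T_0$ to form a computational null set, and Lemma~\ref{1rand} will provide the generic oracles needed to make $\mathcal{S}$ satisfy WWKL.

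I would first construct $T_0$. Let $A^*$ be the c.e.\ set from Lemma~\ref{aedom}. By a finite-injury priority construction, split $A^*$ into disjoint c.e.\ sets $A$ and $B$ with $A\cup B = A^*$ such that $A$ and $B$ are computably inseparable: whenever a new element $x$ enters $A^*$, place $x$ into $A$ if $\varphi_e(x)=0$ and into $B$ if $\varphi_e(x)=1$, where $\varphi_e$ is the currently highest-priority unattended computable candidate for a separator. Let $T_0$ be the canonical computable tree whose paths form the separating class $\{Z:A\subseteq Z,\ Z\cap B=\emptyset\}$. Then $T_0$ is an infinite computable binary tree, its branching-level set equals $\omega\setminus(A\cup B) = \omega\setminus A^*$, and by Lemma~\ref{aedom} applied to computable $Y$ the principal function $p_{\omega\setminus A^*}$ dominates every computable function; hence $T_0$ is very small in the sense of Definition~\ref{vsmalldef}. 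Moreover $[T_0]$ is a separating class for computably inseparable $A,B$, and so has no computable element.

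Next I would select the oracles. Define $\mathcal{N}_1 := \{Z : \exists Y\in[T_0]\ Z\geq_T Y\}$ and $\mathcal{N}_2 := \{Z : \text{some } Y\leq_T Z \text{ is not dominated by } p_{\omega\setminus A^*}\}$; by Lemmas~\ref{sep} and~\ref{aedom} both have Lebesgue measure zero, while 1-randoms have full measure by Lemma~\ref{1rand}. Inductively choose reals $X_1,X_2,\ldots$ so that $X_{n+1}$ is 1-random relative to $T_0\oplus X_1\oplus\cdots\oplus X_n$ and every finite join $T_0\oplus X_1\oplus\cdots\oplus X_n$ avoids $\mathcal{N}_1\cup\mathcal{N}_2$; each step succeeds because the exclusion has measure zero while the relativized 1-randoms have full measure. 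Let $\mathcal{S}$ be the Turing ideal generated by $\{T_0,X_1,X_2,\ldots\}$.

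Finally I would verify. $\mathcal{S}$ is an $\omega$-model of RCA$_0$ by closure under $\leq_T$ and $\oplus$. For WWKL: any tree $T\in\mathcal{S}$ of positive measure is $\leq_T$ some $J_n := T_0\oplus X_1\oplus\cdots\oplus X_n$, and since $X_{n+1}$ is 1-random relative to $J_n$ and hence relative to $T$, Ku\v{c}era's theorem produces a path through $T$ computable from $X_{n+1}$, lying in $\mathcal{S}$. The tree $T_0$ is in $\mathcal{S}$; since every real in $\mathcal{S}$ is $\leq_T$ some $J_n\notin\mathcal{N}_2$, every function in $\mathcal{S}$ is dominated by the branching-level function $p_{\omega\setminus A^*}$, so $T_0$ satisfies the hypothesis of VSMALL in $\mathcal{S}$. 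But no $J_n$ lies in $\mathcal{N}_1$, so no real in $\mathcal{S}$ computes a path through $T_0$, whence $\mathcal{S}\not\models\text{VSMALL}$. I expect the most delicate step to be the priority construction simultaneously achieving $A\cup B = A^*$ and computable inseparability, together with the measure-theoretic bookkeeping that keeps every finite join outside $\mathcal{N}_1\cup\mathcal{N}_2$; the WWKL verification via Ku\v{c}era's theorem and the relativizations of the above lemmas are otherwise standard.
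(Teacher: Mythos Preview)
Your overall strategy coincides with the paper's: take the c.e.\ set $A^*$ from Lemma~\ref{aedom}, split it \`a la Ohashi into a recursively inseparable pair, use the resulting separating class as the VSMALL-witness, and build a WWKL-model out of mutually 1-random reals via Ku\v{c}era's theorem. The verification that WWKL holds and that no member of the model computes a separator is essentially identical to the paper's.

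There is, however, a genuine gap in your inductive construction. You assert that ``each step succeeds because the exclusion has measure zero,'' meaning that for the specific $J_n$ already built, the section $\{X: J_n\oplus X\in\mathcal N_1\cup\mathcal N_2\}$ is null. For $\mathcal N_1$ this follows by relativising Lemma~\ref{sep} to $J_n$ (valid since by induction $J_n$ computes no member of $[T_0]$). For $\mathcal N_2$ it does \emph{not} follow from a ``standard relativisation'' of Lemma~\ref{aedom}: relativising that lemma to $J_n$ would produce a new $J_n$-c.e.\ set, not the fixed $A^*$ you need. Fubini only tells you that \emph{almost every} $J$ has a null section, not that your particular $J_n$ does; and nothing you have arranged about $J_n$ (1-randomness relative to earlier stages, membership in the complement of $\mathcal N_2$) forces its section to be null.

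The paper sidesteps this entirely by choosing a \emph{single} 1-random $R$ lying outside $\mathcal N_1\cup\mathcal N_2$ (possible since all three relevant classes have measure~1) and taking $X_n=\bigoplus_{i\le n}R_i$ to be joins of its columns. Then every $X_n\le_T R$, and since $\mathcal N_1$ and $\mathcal N_2$ are upward closed under $\le_T$, each $X_n$ automatically avoids both; van~Lambalgen's theorem supplies the relative randomness of $R_{n+1}$ over $X_n$ needed for the WWKL argument. This one-shot choice replaces your inductive measure bookkeeping and eliminates the problematic step.
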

		\begin{proof}
			Let $A$ be as in Lemma \ref{aedom}, and let $A_0$ and $A_1$ be infinite c.e.~sets such that $A_0\cup A_1=A$ and $A_0\cap A_1=\emp$ and such that no computable $X\in2^\om$ separates $A_0$ and $A_1$ (such a partition is possible for any c.e.~set - see \cite{ohashi64}). Let $S$ be the separating class of $A_0$ and $A_1$. Then the principal function of $\om\smallsetminus A$ is the branching level function of $S$.

			Now let $R\in 2^\om$ be such that
			\begin{enumerate}
				\item $R$ is 1-random
				\item $R$ does not compute any element of $S$
				\item For all $g\leq_T R$ the principal function of $\om\smallsetminus A$ dominates $g$.
			\end{enumerate}

			Such an $R$ exists as  the classes from Lemmas \ref{aedom} \ref{sep} and \ref{1rand} are of measure 1 and hence their intersection is non-empty.

			Let $R_i$ be the $i^{{\rm th}}$ column of $R$ (that is let $R_i(j)=R(\la i, j\ra)$ for all $j$) and let $X_n=\bigoplus_{i=0}^n R_i$. We claim that the $\omega$-model $\mathfrak{M}$ whose second order part is given by $\{Y: \E n \ Y\leq_T X_n\}$ is a model of WWKL but not of VSMALL.

			To see that it is a model of WWKL, let $T$ be a tree of positive measure in $\mathfrak{M}$. $T$ is then computable from some $X_n$ and therefore, for every $R'\in2^\om$ which is 1-random relative to $X_n$, $R'\oplus X_n$ computes an element of $[T]$ (see \cite{kucera85}). But $R_{n+1}$ is 1-random relative to $X_n$ as every column of a 1-random is random relative to the join of finitely many  other columns.

			Thus $R_{n+1}\oplus X_n$ computes a path through $T$ and hence $\mathfrak{M}$ is a model of WWKL.

			That it is not a model of VSMALL is seen in the fact that every set in   $\mathfrak{M}$ is computable from $R$ and hence cannot compute a path through $S$ (by 2 above). So $S$ has no paths in $\mathfrak{M}$. But $S$ is a nonempty $\Pi_1^0$ class and is thus the set of paths through some infinite computable tree $T_S$ which must be in $\mathfrak{M}$. Furthermore the branching level function of $T_S$ is the set $\om\smallsetminus A$ which dominates everything computable from $R$. Thus any function dominating the branching level function of $T_S$ is not computable from $R$ and hence not in $\mathfrak{M}$. So $T_S$ satisfies the provisions of VSMALL and yet does not have a path. Therefore $\mathfrak{M}$ is not a model of VSMALL.
		\end{proof}

	\section{{\color{red}\rca+DIM $\not\vdash$ DNR}}\label{sectdimnotdnr}
		{\color{red} The published version of this section contains an error. This error and some of its consequences are here indicated in red.}

		We now prove the second half of the main result. 
		Our strategy is to create a model satisfying DIM whose second-order part consists entirely of noncomplex elements of hyperimmune-free degree. Theorem 6 in \cite{KMS} states that a real wtt-computes a DNR function if and only if it is complex. It is well-known that if $X$ is of hyperimmune-free degree  and $X\geq_T Y$, then $X\geq_{\text{wtt}}Y$. Thus if every real in the model we construct is hyperimmune and noncomplex, no element in the model computes a DNR function and hence our model will not satisfy the DNR axiom. We make use of the following definitions and lemmas.

		\begin{dfn}
		A tree $T$ is {\em computably perfect} if there is a strictly increasing computable function $f$ such that for all $n$ and all $\s\in\ext(T)$ of length $f(n)$, there are at least two distinct extensions $\t_1$, $\t_2$ of $\s$ in $\ext(T)$ of length $f(n+1)$.   If $P=[T]$, then we say $P$ is computably perfect if $T$ is.
		\end{dfn}

		\begin{df}  An infinite tree $T$ is {\em diminutive} if no computable tree $T'$ with $[T']\subs [T]$ is computably perfect. If $f\in\om^\om$, then $T$ is {\em $f$-diminutive} if no $f$-computable subtree  is computably perfect. $P=[T]$ is ($f$-)diminutive if $T$ is.
		\end{df}

		For the rest of the paper we will make extensive use of the concept of the {\em Kolmogorov complexity} of a binary string $\s$, denoted $C(\s)$. For an overview of Kolmogorov complexity see \cite{li-vitanyi}. There will be no need in what follows to distinguish between plain complexity and prefix-free complexity.

		\begin{df}
		A real $X\in2^\om$ is {\em complex}  if there is a computable function $f$ such that
		\[
			\A n [ C(X\upharpoonright f(n)) \geq n].
		\]
		If $g\in\om^\om$, then $X$ is {\em $g$-complex} if there is a computable function $f$ such that
		\[
			\A n [ C^g(X\upharpoonright f(n))\geq n],
		\]
		where $C^g(\s)$ denotes the Kolmogorov complexity relative to $g$ - the shortest description of $\s$ on a universal machine using $g$ as an oracle.
		\end{df}

		In \cite{binns-comp} it is shown that a computable tree contains a computably perfect subtree if and only if it contains a complex path. We relativise this in one direction here.

		\begin{lem}\label{maindim} No path through an infinite $f$-diminutive $f$-computable tree is  $f$-complex.
		\end{lem}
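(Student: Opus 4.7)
The plan is to prove the contrapositive: if the $f$-computable infinite tree $T$ contains an $f$-complex path $X$, then $T$ has an $f$-computable computably perfect subtree, contradicting the $f$-diminutivity hypothesis.

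Fix a strictly increasing computable $h$ witnessing the $f$-complexity of $X$, so $C^{f}(X\upharpoonright h(n))\geq n$ for every $n$. The driving observation is a fatness estimate: for any uniformly $f$-computable family $\{S_{n}\}$ of finite sets of binary strings with $X\upharpoonright h(n)\in S_{n}$, we must have $|S_{n}|\geq 2^{n-O(\log n)}$, since otherwise the index of $X\upharpoonright h(n)$ in $S_{n}$ together with the $f$-program computing $S_{n}$ would compress $X\upharpoonright h(n)$ below the complexity bound.

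I would next define an $f$-computable candidate subtree $T^{\ast}\subseteq T$ by level-wise pruning along the $h$-sequence: a string $\tau$ of length $h(n+1)$ lies in $T^{\ast}$ exactly when $\tau\in T$, its ancestor $\tau\upharpoonright h(n)$ lies in $T^{\ast}$, and that ancestor has at least two $T$-extensions of length $h(n+1)$; intermediate levels are filled in by prefix closure. By construction $T^{\ast}$ is $f$-computable, and a tail of $X$ lies in $[T^{\ast}]$: if $X\upharpoonright h(n)$ had only one $T$-extension of length $h(n+1)$ for infinitely many $n$, then $X$ would be $f$-computable from some finite initial segment together with $h$ and $T$, contradicting $f$-complexity. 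Hence $T^{\ast}$ is infinite.

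The hard part is verifying computable perfection of $T^{\ast}$, because the naive pruning only delivers two \emph{present} $T^{\ast}$-extensions at an extendible $\sigma\in T^{\ast}$ of length $h(n)$, whereas we need two that are \emph{extendible} in $T^{\ast}$. To handle this I would either strengthen the retention rule so that each surviving $\sigma$ at level $h(n)$ has many $T$-extensions at level $h(n+1)$, bounding the eventually dying branches by a counting argument driven by the fatness estimate applied to the $f$-computable sets of branches that die by some finite future stage; or replace $h$ by a sparser computable thinning $g$ along which the pruning has iterated enough times to guarantee two surviving extendible branches below each retained node, and use $g$ as the computable witness to perfection. In either case the resulting $T^{\ast}$ is an $f$-computable computably perfect subtree of $T$, yielding the required contradiction.
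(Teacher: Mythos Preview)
Your argument has a genuine gap at the step ``a tail of $X$ lies in $[T^\ast]$''. You claim that if $X\upharpoonright h(n)$ had a unique $T$-extension of length $h(n+1)$ for infinitely many $n$, then $X$ would be $f$-computable from a finite initial segment. That inference is invalid: unique extension at \emph{infinitely many} levels is not the same as unique extension at \emph{cofinitely many} levels. A single such $n$ only yields $C^{f}(X\upharpoonright h(n+1))\le h(n)+O(1)$, and since $h$ is strictly increasing this is perfectly compatible with $C^{f}(X\upharpoonright h(n+1))\ge n+1$. Between such $n$'s the path may branch heavily enough to maintain the complexity lower bound, so nothing forces the set of bad $n$ to be finite. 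Consequently your $T^\ast$ may simply fail to contain any tail of $X$, and you have no argument that $T^\ast$ is infinite. The ``hard part'' you flag---upgrading two present extensions to two \emph{extendible} extensions---is a second, independent gap: neither of the two fixes you sketch (retaining only fat nodes, or thinning $h$) is carried out, and the fatness estimate you state controls the width of the tree along $X$, not the survival of siblings off $X$.

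The paper's proof sidesteps both difficulties by \emph{not} trying to trap $X$ inside the perfect subtree. It takes an arbitrary candidate witness $g$ for $f$-complexity, defines an accelerated function $h$ depending on $g$ and a parameter $u$, and builds a $\Pi^{f}_{1}$ subclass $Q\subseteq[T]$ by iteratively deleting, at each stage, extendible nodes at level $h(n)$ that have only one extendible extension at level $h(n+1)$. If $Q$ were nonempty it would be computably perfect with witness $h$, contradicting $f$-diminutivity; hence $Q=\emptyset$. So the given path $A$ is eventually removed, which produces a single $n$ with $C^{f}(A\upharpoonright h(n+1))\le h(n)+O(1)$. The Recursion Theorem is then used to choose $u$ so that the additive constant is absorbed, giving $C^{f}(A\upharpoonright g(h(n)+u))<h(n)+u$ and defeating $g$ as a complexity witness. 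The two ideas you are missing are (i) letting the perfect subclass be empty and extracting information from the \emph{stage at which $A$ leaves}, rather than keeping $A$ inside, and (ii) the Recursion Theorem to kill the constant.
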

		\begin{proof}

		Suppose $T$ is an $f$-computable and $f$-diminutive infinite binary tree, and $A$ is an infinite path through $T$. Let $g(n)$ be any increasing computable function - a putative witness to  the complexity of the $A$.  Let $u\in\om$ and  define a new computable function $h$ by
		\begin{enumerate}
		\item $h(0)=g(0)$
		\item $h(n+1)=g(2h(n)+u).$
		\end{enumerate}
		Eventually we will  use the Recursion Theorem to choose a $u$ that suits our purposes, but until then we treat it as a fixed parameter.

		The set of paths through $T$ is a $\Pi^f_1$ class  $P$, and we can let $P=\bigcap_s P_s$ where each $P_s$  is a clopen subclass of $2^\om$ and $\la P_s \ra_{s\in \om}$ is an $f$-computable sequence.  Consider the  $\Pi^f_1$ class $Q\subs P$ defined as follows. Let $Q_0=2^\om$. If $Q_s$ has been defined, let $S_s\subs2^{<\om}$ be the set
		\[
			\{\s\in\ext(Q_s): \E n\leq s\bigl[ |\s|=h(n)\wedge \E! \t\in\ext(Q_s) [ \t\sups\s \wedge |\t|=h(n+1) ]\bigr]\}.
		\]
		If $S_s^\ast=\{X\in2^\om: \E \s\in S_s  X\supset\s \}$, then define $Q_{s+1}=P_{s+1}\cap Q_s\smallsetminus S_s^\ast$.  $Q$ is then $\bigcap_s Q_s$.

		$Q$ as defined is apparently computably perfect - witnessed by $h$ - and a $\Pi^f_1$ class, and thus the paths through some $f$-computable computably perfect tree. But as $P$ is diminutive, $Q$ must be empty. Thus there is  a stage $s$ such that $A\in S_s^\ast$, and there is an $n$ such that $A\upharpoonright h(n)\in S_s$.

		Now consider a machine $M$ that works as follows. $M$ takes $\s\in2^\om$ as input and tests to see if it is of length $h(n)$ for some $n\leq |\s|$. If it is, it uses $f$ to search for an $t$ such that $\s\in S_t$.  If it finds such an $t$, it outputs the unique extension of $\s$ of length $h(n+1)$ on $Q_t$.  We call this output $\t$, and we have  $C^f(\t) < |\s| + \mathcal{O}(1).$

		On input $A\upharpoonright h(n)$ $M$ will output the unique string on $Q_s$ of length $h(n+1)$ extending $A\upharpoonright h(n)$. That is, it will output $A\upharpoonright h(n+1)$. So
		\[
			C^f\bigl(A\upharpoonright h(n+1)\bigr)< h(n)+ \mathcal{O}(1),
		\]
		or
		\[
			C^f\bigl(A\upharpoonright g(h(n)+u)\bigr)< h(n) + \mathcal{O}(1).
		\]

		The last equation suggests that if we choose $u$ propitiously, we can  ensure that $g$ does not witness the fact that $A$ is  $f$-complex. We do this now.

		The value of the constant on the right-hand side of the equation depends only on the index for the $\Pi^f_1$ class $Q$ (or more precisely on an index for the sequence $\la S_s\ra_{s\in\om}$), and there is a computable function $k$ such that if $e$ is any index for $Q$ , then $C^f(\t)\leq |\s| + k(e)$. Furthermore, there is a computable function $\varphi$ that, given the parameter $u$, will give an  index for $Q$. Thus
		\[
			C^f\bigl(A\upharpoonright g(h(n)+u)\bigr)\leq h(n) + k(\varphi(u)).
		\]

		Using the Recursion Theorem, we fix a value $e$ such that the $\Pi^f_1$ class with index $\varphi(k(e))$ is equal to the $\Pi^f_1$ class with index $e$.
		Now we choose the parameter $u$ to be   equal to $k(e)$. So we have
		\[
			C^f\bigl(A\upharpoonright g(h(n)+ k(e))\bigr) < h(n) +  k(\varphi(k(e))).
		\]
		But as $\varphi(k(e))$ and $e$ index the same $\Pi^f_1$ class,
		\[
			C^f\bigl(A\upharpoonright g(h(n)+k(e))\bigr)< h(n) + k(e),
		\]
		and $A$ is not $f$-complex.

		\end{proof}
		\begin{dfn}
		A  set $A\in2^\om $ is {\em ($f$-)hyperimmune} if its principal function is not dominated by any ($f$-)computable function.
		\end{dfn}

		In   \cite{KMS} it is shown that a set $X$ is not wtt-reducible to a hyperimmune set if and only if  $X$ is complex. We need a limited  relativisation of this theorem here and we present just the direction we require. The proof closely follows \cite{KMS} Theorem 10.

		\begin{lem}[\cite{KMS}]\label{hyperavoidable} 
		If $f$ is of hyperimmune-free degree and $A$ is not wtt-reducible to an $f$-hyperimmune set, then $A$ is $f$-complex.
		\end{lem}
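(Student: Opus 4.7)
The plan is to prove the contrapositive: assuming $A$ is not $f$-complex, we construct an $f$-hyperimmune set $B$ with $A\leq_{\text{wtt}}B$. A preliminary simplification is that, because $f$ has hyperimmune-free degree, every $f$-computable function is dominated by a computable one; a short check then shows that a set is $f$-hyperimmune if and only if it is hyperimmune in the usual sense. It therefore suffices to produce a hyperimmune $B$ that wtt-computes $A$.

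The hypothesis that $A$ is not $f$-complex gives that, for each computable $g$, there exist (in fact infinitely many) $n$ with $C^f(A\restrict g(n))<n$. The construction follows the proof of Theorem~10 in \cite{KMS}, relativised to the oracle $f$. Informally, one enumerates short $f$-descriptions of initial segments of $A$ and uses them to lay down $B$ in widely spaced coded blocks, arranging that (i) the principal function $p_B$ outruns every computable function, giving hyperimmunity; and (ii) a wtt-procedure with oracle $B$, using $f$ to run the relativised decompressor, recovers $A$ with a computable use bound, giving $A\leq_{\text{wtt}}B$.

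The main obstacle is to satisfy these two demands simultaneously: the wtt-reduction insists on a computable use bound, while hyperimmunity requires that the gaps in $B$ outpace every computable function. The resolution, as in \cite{KMS}, is a diagonalisation against a computable enumeration of candidate dominators $g_e$. For each $e$, the failure of $f$-complexity along $g_e$ supplies a stage at which some initial segment of $A$ admits a short $f$-description; we place the next coded block of $B$ past $g_e$ at that stage, while making the block itself carry just enough information for the wtt-decoder to continue reconstructing $A$. Hyperimmune-freeness of $f$ then closes the argument, since every $f$-computable dominator is already dominated by a computable one and so is automatically defeated.
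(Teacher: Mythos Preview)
Your route is the contrapositive of the paper's: the paper argues directly, defining a single explicit set $B=\{\langle n,\sigma\rangle: g(n)=\sigma \text{ and }\forall k<n\,g(k)\ne\sigma\}$ (where $g(n)$ is the least $\sigma$ with $U^f(\sigma)\!\downarrow\,\supseteq A\restrict n$), claiming $A\leq_{\text{wtt}}B$, deducing that $B$ is not $f$-hyperimmune, and turning an $f$-computable (hence, by hyperimmune-freeness of $f$, computably dominated) bound on the growth of $B$ into a complexity witness $h'$ for $A$. You instead diagonalize against candidate dominators, laying down coded blocks of $B$ one requirement at a time.

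Both arguments, however, founder on the same step, and the paper itself flags it as an error in a corrigendum. The purported reduction $A\leq_{\text{wtt}}B$ is not a wtt-reduction, because recovering $A$ from the stored $U^f$-descriptions requires running $U^f$, and that needs the oracle $f$. You say this outright---your procedure recovers $A$ ``using $f$ to run the relativised decompressor''---but then the sole oracle is not $B$; what you actually obtain is at best $A\leq_{\text{wtt}}B\oplus f$, which does not give the conclusion. This is not a repairable detail: as the paper records (after Bienvenu and Shafer), taking $f$ to be a Martin-L\"of random of hyperimmune-free degree and $A=f$ yields a set that is complex (hence not wtt-below any hyperimmune set, hence by your own preliminary observation not wtt-below any $f$-hyperimmune set) yet is not $f$-complex, since $C^f(f\restrict g(n))\leq C(g(n))+O(1)\leq \log n+O(1)$ for every computable $g$. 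So the lemma as stated is false, and no rearrangement of the decompression idea will prove it.
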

		\begin{proof}
			Suppose $A$ is not wtt-reducible to any $f$-hyperimmune set. We identify, without further comment, the binary string $\s$ with the natural number whose binary expansion is $1\s$. Suppose $C^f$ is calculated using  the universal $f$-oracle machine $U^f$ and choose $d\in\om$ so that $C^f(\s) \leq |\s|+d$ for all $\s$. Let $g(n)$ be the least $\s$ such that $U^f(\s)\halts\sups A\upharpoonright n.$ Now let
			\[
				B:=\{\la n, \s\ra: g(n)=\s \wedge \A k< n[g(k)\neq\s]\},
			\]
			{\color{red}and observe that $A\leq_{\text{wtt}} B$.} [As pointed out by Laurent Bienvenu and Paul Shafer (personal communication, 2012) it seems that the reduction of $A$ to $B$ also requires oracle access to $f$.] (Given $n$ one finds the maximum $\s$ of length at most  $ n+1+d$ such that $\la m,\s\ra\in B$ for some $m\leq n+1$. Then $U^f(\s, n)=A(n)$ and the queries to $B$ are bounded by $(2^{n+d+2}-1)\times (n+1)$.) Therefore $B$ is not $f$-hyperimmune.
			Let $h$ be an $f$-computable function such that there are more than $2^{n+1}$ elements of $B$ in the rectangle
			\[
				\{0,1,\dots h(n)-1\}\times\{0,1,  \dots 2^{h(n)+d}-1\}.
			\]
			As $h$ is $f$-computable and $f$ is of hyperimmune-free degree, there is a computable function $h'$ such that $h'(n)\geq h(n)$ for all $n$. Therefore there are more than $2^{n+1}$ elements of $B$ in the rectangle
			\[
				\{0,1,\dots h'(n)-1\}\times\{0,1,  \dots 2^{h'(n)+d}-1\}.
			\]

			Then $g(h'(n))\geq 2^{n}$ and $C^f(A\upharpoonright h'(n))\geq n$ and so $A$ is $f$-complex.
		\end{proof}
		{\color{red}
			\begin{cor}
			If $f$ is of hyperimmune-free degree, then $A$ is complex if and only if it is $f$-complex.
			\end{cor}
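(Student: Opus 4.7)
The plan is to handle the two directions separately. The implication from $f$-complex to complex is essentially immediate: since the universal plain machine can simulate the universal $f$-oracle machine by simply never querying the oracle, one has $C^f(\sigma)\leq C(\sigma)+\mathcal{O}(1)$, hence $C(\sigma)\geq C^f(\sigma)-\mathcal{O}(1)$. So any computable witness $g$ to $f$-complexity of $A$, shifted by an additive constant, yields a witness to ordinary complexity.

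For the converse, complex implies $f$-complex, the plan is to combine Lemma \ref{hyperavoidable} with the characterisation from \cite{KMS} cited just before that lemma: a set is complex if and only if it is not wtt-reducible to any hyperimmune set. The bridge I would establish is that when $f$ is of hyperimmune-free degree, a set $B$ is hyperimmune if and only if it is $f$-hyperimmune. One direction is automatic since every computable function is $f$-computable; for the other, if the principal function $p_B$ is dominated by some $f$-computable $h$, the hyperimmune-free property of $f$ supplies a computable $h'$ dominating $h$, so $h'$ also dominates $p_B$ and $B$ is not hyperimmune.

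Combining these pieces, if $A$ is complex then $A$ is not wtt-reducible to any hyperimmune set, which under the assumption on $f$ coincides with not being wtt-reducible to any $f$-hyperimmune set; Lemma \ref{hyperavoidable} then yields $f$-complexity of $A$. The only step of any substance is verifying the coincidence of hyperimmunity and $f$-hyperimmunity for $f$ of hyperimmune-free degree; the remainder is a direct chaining of the results already in hand.
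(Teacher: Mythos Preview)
Your proposal is correct and follows essentially the same route as the paper: the easy direction via $C^f(\sigma)\leq C(\sigma)+\mathcal{O}(1)$, and the substantive direction by combining the KMS characterisation of complexity with the observation that, for $f$ of hyperimmune-free degree, hyperimmunity and $f$-hyperimmunity coincide, then invoking Lemma~\ref{hyperavoidable}. One tiny slip of phrasing: it is the $f$-oracle machine that simulates the plain machine (ignoring its oracle), not the reverse, though your inequality and its use are stated correctly.
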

		}
		\begin{proof}  The right to left direction is trivial. If $f$ is hyperimmune-free, then it it easy to see that all hyperimmune sets are $f$-hyperimmune.  Lemma \ref{hyperavoidable} and Theorem 10 in \cite{KMS} completes the proof.

		\end{proof}

		{\color{red}
		\begin{cor}\label{notdnr}
		If $f$ is of hyperimmune-free degree, then no   path of hyperimmune-free degree through an infinite $f$-computable $f$-diminutive tree  computes a DNR function.
		\end{cor}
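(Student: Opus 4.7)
The plan is to chain together the preceding lemmas and corollary, together with the KMS characterization of wtt-computing a DNR function.

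Let $A$ be a path of hyperimmune-free degree through an infinite $f$-computable $f$-diminutive tree $T$, where $f$ has hyperimmune-free degree. First I would apply Lemma \ref{maindim} directly to $A$ and $T$ to conclude that $A$ is not $f$-complex. Second, since $f$ has hyperimmune-free degree, the preceding corollary gives that $f$-complexity and ordinary complexity coincide, so $A$ is not complex.

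Third, I would invoke Theorem 6 of \cite{KMS} (quoted in the opening paragraph of this section): a real wtt-computes a DNR function if and only if it is complex. Since $A$ is not complex, $A$ does not wtt-compute any DNR function. Finally, I would use the well-known fact (also recalled in this section) that if $A$ has hyperimmune-free degree and $A\geq_T g$, then $A\geq_{\text{wtt}} g$, because the use function of any total Turing functional on a hyperimmune-free oracle is dominated by a computable function. Hence if $A$ Turing-computed a DNR function $g$, it would wtt-compute $g$, contradicting the previous step.

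The proof is essentially bookkeeping: all of the substantive work lies in Lemma \ref{maindim}, Lemma \ref{hyperavoidable}, the preceding corollary, and the cited results of \cite{KMS}. There is no real obstacle, but I would make sure to explicitly mark where each of the two hyperimmune-freeness hypotheses is consumed — the hyperimmune-freeness of $f$ in the passage from $f$-complex to complex via the preceding corollary, and the hyperimmune-freeness of $A$ in the final reduction of a hypothetical Turing computation to a wtt one.
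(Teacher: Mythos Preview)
Your proposal is essentially identical to the paper's own proof: apply Lemma~\ref{maindim} to get that the path is not $f$-complex, use the preceding corollary (via the hyperimmune-freeness of $f$) to drop the $f$, invoke \cite{KMS} to rule out wtt-computing a DNR function, and then use the hyperimmune-freeness of the path to pass from wtt to Turing. The only thing to add is that the paper itself flags this corollary (and the corollary it relies on) as erroneous, with a counterexample attributed to Bienvenu and Shafer; the gap lies upstream in Lemma~\ref{hyperavoidable}, not in the chaining you describe here.
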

		}
		[This is false, as pointed out by Laurent Bienvenu and Paul Shafer (personal communication, 2012): take $f$ to be a ML-random of hyperimmune-free degree, and $A=f$; then $A$ is complex but not $f$-complex.]
		\begin{proof}
		By Lemma \ref{maindim}, any path through such a tree is not $f$-complex. Hence by the previous lemma, it is not complex and does not wtt-compute a DNR function. As it is of hyperimmune-free degree, it does not  Turing compute a DNR function.
		\end{proof}

		{\color{red}
		\begin{thm}\label{dimnotdnr}
		$\text{\rca+DIM}$ $\not\vdash \text{DNR}$

		\end{thm}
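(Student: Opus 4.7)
The plan is to construct an $\omega$-model $\mathfrak{M}$ of $\text{RCA}_0 + \text{DIM}$ whose second-order part consists entirely of sets of hyperimmune-free degree and in which no set is complex. By Theorem~6 of \cite{KMS} together with the standard fact that Turing reductions from hyperimmune-free sources are automatically wtt-reductions, this will imply $\mathfrak{M} \not\models \text{DNR}$: a DNR function is always complex, but no set in $\mathfrak{M}$ wtt-computes a complex set.

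I would build an increasing sequence $X_0 \leq_T X_1 \leq_T \cdots$ in stages and take the second-order part of $\mathfrak{M}$ to be $\{Y : \exists n\, Y \leq_T X_n\}$. Starting with $X_0$ computable, I would dovetail through all pairs consisting of a stage and a tree index, so that every infinite binary tree that eventually appears in $\mathfrak{M}$ is processed at some stage. When an $X_n$-computable tree $T$ is processed, I would apply Lemma \ref{relhif} to obtain a path $g \in [T]$ for which $g \oplus X_n$ has hyperimmune-free degree relative to $X_n$, and then set $X_{n+1} = X_n \oplus g$. By induction and the second clause of Lemma \ref{relhif}, each $X_n$, and hence every element of $\mathfrak{M}$, has hyperimmune-free degree. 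Standard verifications show that $\mathfrak{M}$ satisfies $\text{RCA}_0$.

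To verify $\mathfrak{M} \models \text{DIM}$, I would note that any tree in $\mathfrak{M}$ satisfying the DIM hypothesis in $\mathfrak{M}$ is processed at some stage, at which point a path is added. To verify $\mathfrak{M} \not\models \text{DNR}$, I would argue that no set in $\mathfrak{M}$ is complex: by Lemma \ref{maindim}, a path added through an $X_n$-computable $X_n$-diminutive tree is not $X_n$-complex, so by Corollary \ref{notdnr} it does not compute a DNR function. The property of not computing a DNR function is closed downward under $\leq_T$ and under joins of two such sets (when both are hyperimmune-free), so it propagates through the whole of $\mathfrak{M}$.

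The principal obstacle is bridging the hypothesis under which DIM is invoked (no strong bounding witness for $T$ lies in $\mathfrak{M}$) with the hypothesis actually needed in Lemma \ref{maindim} and Corollary \ref{notdnr}, namely $X_n$-diminutiveness of $T$. The crux I would establish is: if $T$ admits an $X_n$-computable computably perfect subtree $T'$, then the computable perfection witness $h$ for $T'$ can be promoted to a strong bounding witness in $\mathfrak{M}$ for the branching nodes of $T$, because every branching node of $T'$ is automatically a branching node of $T$. Contrapositively, any $X_n$-computable tree $T$ that satisfies the DIM hypothesis in $\mathfrak{M}$ must be $X_n$-diminutive, closing the gap. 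The delicate technical step is showing that such a witness correctly bounds the branching depth starting from an arbitrary extendible node of $T$, not merely from those already in $T'$.
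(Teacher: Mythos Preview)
Your overall strategy---iterate the hyperimmune-free basis theorem to build a tower $X_0\leq_T X_1\leq_T\cdots$ and take $\mathfrak{M}$ to be its downward closure, then invoke Lemma~\ref{maindim} and Corollary~\ref{notdnr}---is exactly the paper's plan. But there is a crucial discrepancy in what happens at each stage. You say that whenever an $X_n$-computable infinite tree $T$ is processed you ``apply Lemma~\ref{relhif} to obtain a path $g\in[T]$'' and set $X_{n+1}=X_n\oplus g$. Taken literally this adds a path to \emph{every} infinite binary tree in $\mathfrak{M}$, so $\mathfrak{M}\models\text{WKL}$ and hence $\mathfrak{M}\models\text{DNR}$, defeating the purpose. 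The paper is more careful: at stage $\langle m,n\rangle+1$ it checks whether $T=\{n\}^{X_m}$ is $X_{\langle m,n\rangle}$-\emph{diminutive} and adds a path only in that case, otherwise setting $X_{\langle m,n\rangle+1}=X_{\langle m,n\rangle}$. It is precisely this restriction that makes Lemma~\ref{maindim} and Corollary~\ref{notdnr} applicable, and the paper further arranges (property~3 in its proof) that $X_{i+1}$ is itself a path through an $X_i$-computable $X_i$-diminutive tree, so that Corollary~\ref{notdnr} applies directly to each $X_{i+1}$ rather than via an unjustified ``closure under joins'' principle.

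Your final paragraph seems to sense this tension, but the proposed repair---promote a computable-perfection witness $h$ for a subtree $T'$ to a strong DIM witness for all of $T$---does not work. The function $h$ only controls branching above nodes of $\ext(T')$; it says nothing about $\sigma\in\ext(T)\setminus\ext(T')$. Concretely, let $T$ be the full binary tree above the node $0$, while above $1$ it is a perfect tree whose branching-level set is hyperimmune. Then $T$ has an obvious computably perfect (even computable) subtree, so it is not diminutive; yet in any hyperimmune-free model no single $f$ bounds the distance to the next branching node above the extendible nodes extending $1$, so $T$ still satisfies the DIM hypothesis there. Hence the implication ``not $X_n$-diminutive $\Rightarrow$ $T$ fails the DIM hypothesis in $\mathfrak{M}$'' is false, and the ``delicate technical step'' you flag cannot be carried out as stated. (The paper's own proof does not explicitly verify DIM either, and---as the red annotations record---the argument of this section is in any case acknowledged to be defective, for reasons located in Lemma~\ref{hyperavoidable} and Corollary~\ref{notdnr}.)
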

	}
	\begin{proof}
		To construct a model of DIM that is not a model of DNR, we first define a sequence $\om=X_0, X_1, \dots$ of elements of $2^\om$ such that for all $i>0$
		\begin{enumerate}
		\item $X_i\leq_T X_{i+1}$,

		\item  $X_i$ is of hyperimmune-free degree,

		\item $X_{i+1}$ is a path through  some $X_i$-diminutive $X_i$-computable tree.
		\end{enumerate}

		The second order part of our  model will be $\{Y:\E i Y\leq_T X_i\}$. As usual $\la.,.\ra$ is a computable bijection from $\om\times\om$ onto $\om$ and we can assume that $\la m, n\ra \geq m$ for all $n$ and $m$. Suppose  that $X_i$ has properties 1, 2, and 3 for all $0<i\leq\la m, n\ra$. Now we define $X_{\la m, n\ra+1}$. If $T=\{n\}^{X_m}$ is not an infinite binary $X_{\la m, n\ra}$-diminutive tree, then  let $X_{\la m, n \ra+1}=X_{\la m, n \ra}$. Otherwise, as $X_m\leq_T X_{\la m, n\ra}$, $T$ is also $X_{\la m, n\ra}$-computable and, by Lemma \ref{relhif}, there is a  $Y\in [T]$ such that $Y\oplus X_{\la m, n\ra}$ is of hyperimmune-free degree relative to $X_{\la m, n\ra}$. As $X_{\la m, n\ra}$ is of hyperimmune-free degree  by 2, $Y\oplus X_{\la m, n\ra}$ is also of hyperimmune-free degree by Lemma \ref{relhif}.   Set $X_{\la m, n\ra+1}=Y\oplus X_{\la m, n\ra}$. Properties 1 and 2 are satisfied immediately.

		To prove property 3, first observe that if $X_{\la m,n\ra}=X_{\la m,n\ra+1}$ , then $X_{\la m,n\ra+1}$ is a path through the nonbranching tree $\{X_{\la m,n\ra}\upharpoonright s: s\in \om\}$.

		Otherwise $X_{\la m,n\ra+1}$ is a path through the tree $T'$ consisting of the extendible nodes of
		\[
			\{f\oplus X_{\la m,n\ra}: f\in[T]\},
		\]
		where $[T]$ is  as above. But the branching nodes of $T'$ are uniformly sparser than the branching nodes of $T$ (each branching node of $T'$ is exactly twice the length of the corresponding branching node in $T$), and as $T$ is $X_{\la m,n\ra}$-diminutive, so is $T'$. As $X_{\la m,n\ra+1}$ is a path through $T'$, property 3 is satisfied.

		The model of DIM will be the $\omega$-model with second order part $\{f\in \om^\om: \E i\  f\leq_T X_i\}$.

		{\color{red}Lemma \ref{notdnr}} then gives that no $X_i$  computes a DNR function for any $i$. As every element in the second-order part of the model is computable from some $X_i$, no element in our model computes a DNR function. So our model does not satisfy the DNR axiom.

		\end{proof}

		{\color{red}Theorems \ref{wwklnotvsmall} and \ref{dimnotdnr} together show that VSMALL and DIM  are independent of both DNR and WWKL.}

		Finally we show that DIM and VSMALL are distinct.

	\section{\rca+VSMALL $\not\vdash$ DIM.}

		In this section we produce a model $\mathfrak{M}$ of VSMALL that is not a model of DIM. To construct  the model we use a method very similar to the one used in Section \ref{sectdimnotdnr} but instead of using diminutive trees, we use {\em very small} trees. Recall from Definition \ref{vsmalldef} that a tree $T$ is {\em very small} if the principal function of $\br(T)$ dominates every computable function. Equivalently, if the principal function of $\{|\s|: \s\in\br(T)\}$ dominates every computable function. Furthermore, instead of using the concept of $f$-complexity to distinguish DIM from DNR as in Theorem \ref{dimnotdnr}, now we use the concept of {\em computable traceability} to distinguish VSMALL and DIM.

		\begin{dfn}
		The {\em canonical index} of a finite set  $\{x_1< x_2< \dots<  x_n\}\subs \om$ is
		\[
			\prod_{i=1}^n p_i^{x_i},
		\]
		where $p_i$ is the $i^{\text{th}}$ prime number. We denote by $D_n$ the finite set with canonical index $n$. A {\em ($h$-)computable array} is an infinite sequence of canonically indexed finite sets $\la D_{r(n)}\ra$  with $r$  a ($h$-)computable function.
		\end{dfn}

		\begin{dfn}\label{strong}
		$f \in\om^\om$ is {\em computably traceable} ({\em relative to $h$}) if for all nondecreasing unbounded  computable functions $\varphi$ and for all $g\leq_T f$, there is a ($h$-)computable array $\la D_{r(n)}\ra$ of finite sets such that

		\begin{enumerate}

		\item $\A n \  || D_{r(n)} || \leq \varphi(n)$

		\item $\A n \  g(n)\in D_{r(n)}$

		\end{enumerate}

		\end{dfn}

		\begin{lem}
		If $f$ is computably traceable, and $g$ computably traceable relative to $f$, then $g$ is computably traceable.
		\end{lem}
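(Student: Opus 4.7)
The plan is to trace a $g$-computable function by composing two traces: first an $f$-computable trace of $h$ (coming from the relative traceability of $g$), and then a computable trace of the resulting $f$-computable index function (coming from traceability of $f$). Given $h \leq_T g$ and a nondecreasing unbounded computable target bound $\varphi$, I would assume (without loss of generality) that $\varphi(n) \geq 1$ for all $n$, and introduce the auxiliary bound $\psi(n) = \lfloor \sqrt{\varphi(n)} \rfloor$, which is again nondecreasing, unbounded, computable, and $\geq 1$.

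Step one is to apply the relative traceability of $g$ to $h$ with bound $\psi$, producing an $f$-computable array $\langle D_{\rho(n)} \rangle$ with $||D_{\rho(n)}|| \leq \psi(n)$ and $h(n) \in D_{\rho(n)}$. Since the index function $\rho$ is itself $f$-computable, step two is to apply the traceability of $f$ to $\rho$, again with bound $\psi$, yielding a computable array $\langle D_{s(n)} \rangle$ with $||D_{s(n)}|| \leq \psi(n)$ and $\rho(n) \in D_{s(n)}$. Step three is to define the computable array $T_n = \bigcup \{ D_i : i \in D_{s(n)} \text{ and } ||D_i|| \leq \psi(n) \}$ and recode it as $D_{t(n)}$ for a computable $t$. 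Because $\rho(n) \in D_{s(n)}$ and $||D_{\rho(n)}|| \leq \psi(n)$, the set $D_{\rho(n)}$ survives the filter, so $h(n) \in T_n$; and the bound $||T_n|| \leq ||D_{s(n)}|| \cdot \psi(n) \leq \psi(n)^2 \leq \varphi(n)$ is exactly what we need.

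The main subtlety, handled by the size filter in step three, is that a generic canonical index $i$ appearing in the outer trace $D_{s(n)}$ carries no a priori bound on $||D_i||$; without filtering, the union $\bigcup_{i \in D_{s(n)}} D_i$ could be arbitrarily large, even though only one of the indices, namely $\rho(n)$, points to a small set. Restricting to those $i$ with $||D_i|| \leq \psi(n)$ preserves $\rho(n)$ (the index we actually care about) while forcing the product bound to hold, and the balanced choice $\psi = \lfloor \sqrt{\varphi} \rfloor$ arranges that $\psi(n)^2 \leq \varphi(n)$ as required. Everything else in the argument is routine manipulation of indices of finite sets.
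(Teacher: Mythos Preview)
Your proposal is correct and follows essentially the same argument as the paper: trace $h$ by an $f$-computable array with bound $\lfloor\sqrt{\varphi}\rfloor$, trace that array's index function by a computable array with the same bound, and take the size-filtered union to get a computable trace of $h$ with bound $\varphi$. Your explicit attention to the floor in $\psi=\lfloor\sqrt{\varphi}\rfloor$ and to the necessity of the size filter is slightly more careful than the paper's presentation, but the idea is identical.
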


		\begin{proof}
		Fix any nondecreasing unbounded computable function $\varphi$ and any function $h\leq_T g$.
		As  $g$ is computably traceable relative to $f$, there is an $f$-computable index function $r(n)$ satisfying

		\begin{enumerate}

		\item $\A n \  || D_{r(n)} || \leq \sqrt{\varphi(n)}$

		\item $\A n \  h(n)\in D_{r(n)}.$

		\end{enumerate}
		As $r\leq_T f$ there is a computable index fuction $s$ satisfying

		\begin{enumerate}

		\item[3.] $\A n \  || D_{s(n)} || \leq \sqrt{\varphi(n)}$

		\item[4.] $\A n \  r(n)\in D_{s(n)}.$

		\end{enumerate}

		Now we can define a computable function $t(n)$ so that
		\[
			D_{t(n)}=\bigcup\{ D_m: m\in D_{s(n)} \text{ and } || D_m || \leq \sqrt{\varphi(n)}\}.
		\]

		For all $n$ $||D_{t(n)}|| \leq \sqrt{\varphi(n)}\cdot ||  D_{s(n)} || \leq \varphi(n)$ so $t(n)$ satisfies clause 1 of Definition \ref{strong}. Also, $r(m)\in D_{s(n)}$ and $|| D_{r(m)} ||\leq \sqrt{\varphi(n)}$ so $D_{r(n)}\subs D_{t(n)}$. Therefore $h(n)\in D_{t(n)}$ and the second clause of Definition \ref{strong} is satisfied. $\varphi$ and $h$ were arbitrary so $g$ is computably traceable.
		\end{proof}

		\begin{dfn}
		Let $T$ be a tree and $P=[T]$, the class of paths through $T$. For convenience we introduce the notation
		\[
			T[n]=P[n]:=\{\s\in\ext(T): |\s|=n\}
		\]
		for the set of extendible nodes of $T$ or $P$ of length $n$.
		\end{dfn}

		\begin{lem}\label{hif}
			If $f\in\om^\om$ and $h$ is a hyperimmune-free element of a very small $\Pi_1^f$ class $P$, then for all $h'\leq_T h$ $h'$ is a hyperimmune-free  element of a very small   $\Pi_1^f$ class.
		\end{lem}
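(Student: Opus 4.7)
My plan is to push the defining tree of $P$ forward along a truth-table reduction computing $h'$ from $h$. Since $h$ has hyperimmune-free degree and $h' \leq_T h$, we have $h' \leq_{tt} h$, so I may fix a truth-table functional $\hat\Phi$ with $\hat\Phi^h = h'$ and computable increasing use $u$. Fixing any $f$-computable tree $T$ with $P=[T]$, define
\[
T' := \{\tau \in 2^{<\om} : \exists\,\sigma \in T \text{ with } |\sigma|=u(|\tau|) \text{ and } \hat\Phi^\sigma \supseteq \tau\}.
\]
Monotonicity of $u$ makes $T'$ prefix-closed, $T'$ is $f$-computable since $T$ and $\hat\Phi$ are, and the strings $h\upharpoonright u(n) \in T$ witness that $h'\upharpoonright n \in T'$ for every $n$, so $h' \in [T']$. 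A standard compactness argument inside $[T]$ shows conversely that every path of $T'$ is of the form $\hat\Phi^g$ for some $g \in [T]$.

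The core estimate is that branching in $T'$ is controlled by branching in $T$. If $\tau_1, \tau_2$ are distinct extendible nodes of $T'$ at level $n$, realising paths $g_1, g_2 \in [T]$ with $\hat\Phi^{g_i} \supseteq \tau_i$ must disagree within their first $u(n)$ oracle bits, because $\hat\Phi^g \upharpoonright n$ depends only on $g\upharpoonright u(n)$. Hence the number of extendible nodes of $T'$ at level $n$ is at most the number of extendible nodes of $T$ at level $u(n)$. Since in a pruned binary tree the number of branching nodes at levels strictly below $n$ equals one less than the number of extendible nodes at level $n$, this yields
\[
|\br(T') \cap 2^{<n}|\ \leq\ |\br(T) \cap 2^{<u(n)}|.
\]

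Very-smallness of $[T']$ now follows by combining the above with the general inequality $|\br(T) \cap 2^{<L}| \leq 2^{b_T(L)} - 1$, where $b_T(L)$ counts branching \emph{levels} of $T$ strictly below $L$. Indeed, if for some computable $\psi$ the tree $T'$ had $\geq k$ branching levels below $\psi(k)$ for infinitely many $k$, the above chain of inequalities would force $b_T(u(\psi(k))) \geq \log_2(k+1)$ for those $k$, contradicting the very-smallness of $T$ applied to the computable function $j \mapsto u(\psi(2^j))$. Hyperimmune-freeness of $h'$ is immediate from $h' \leq_T h$ and the downward closure of hyperimmune-free degrees, so $h'$ is a hyperimmune-free element of the very small $\Pi^f_1$ class $[T']$.

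The main subtlety is the bookkeeping between branching nodes (which the definition of \emph{very small} counts) and branching levels (which are what one can most directly estimate in $T'$); the exponential slack in the inequality $|\br(T) \cap 2^{<L}| \leq 2^{b_T(L)} - 1$ must be absorbed by pre-composing the computable test function $\psi$ with $k \mapsto 2^k$ before invoking the hypothesis on $T$.
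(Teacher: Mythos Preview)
Your approach is correct and follows the same core idea as the paper: use hyperimmune-freeness of $h$ to upgrade the Turing reduction $h'\leq_T h$ to a truth-table reduction, then push $P$ forward along it. The paper's proof is essentially a two-line citation --- it obtains a total computable functional $\Phi$ with $\Phi(h)=h'$ and then appeals to a relativisation of Theorem~4.3.6 of \cite{binns-thesis} to conclude that $\Phi[P]$ is a very small $\Pi^f_1$ class. Your argument unpacks that citation explicitly, and this buys you two things. First, it is self-contained: the inequality $||T'[n]||\leq ||T[u(n)]||$ together with the standard identity (extendible nodes at level $n$) $=$ (branching nodes below level $n$) $+1$ is precisely the content of the cited thesis result, and your exponential bookkeeping between branching \emph{nodes} and branching \emph{levels} handles the passage between the two equivalent formulations of ``very small''. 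Second, your direct counting never uses that $f$ is of hyperimmune-free degree, a hypothesis the paper's proof invokes (though it does not appear in the lemma statement) presumably because the straightforward relativisation of the thesis theorem yields ``very small relative to $f$'' and one must then descend back to plain ``very small''.
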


		\begin{proof}
		As $h'\leq_T h$ and $h$ is hyperimmune-free, there is a total computable functional $\Phi$ on $2^\om$ such that $\Phi(h)=h'$ (see \cite{odifreddi} Theorem VI.5.5 and Proposition III.3.2). As $h$ is hyperimmune-free, so is $h'$ and $h'\in\Phi[P]$. A straightforward relativisation of  Theorem 4.3.6  in \cite{binns-thesis}, and using the fact that $f$ is of hyperimmune-free degree, gives that $\Phi[P]$ is a  very small  $\Pi_1^f$.
		\end{proof}
		\begin{lem}
		Let $f\in \om^\om$ and $g$ be a hyperimmune-free element of a very small $\Pi_1^f$ class. Then $g$ is computably traceable relative to $f$.
		\end{lem}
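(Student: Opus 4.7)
The plan is to combine the two hypotheses on $g$: hyperimmune-freeness will furnish a computable modulus for any reduction $h\leq_T g$, while the very-smallness of the ambient $\Pi_1^f$ class will force the count of extendible nodes at each length to grow slowly. Fix a nondecreasing unbounded computable $\varphi$ and a function $h\leq_T g$ witnessed by a Turing functional $\Psi$ with $\Psi^g=h$. Since the use of $\Psi^g$ is $g$-computable and $g$ is hyperimmune-free, I would choose a nondecreasing computable $m$ that dominates both the use and the runtime of $\Psi^g$, so that $\Psi^{g\restrict m(n)}(n)[m(n)]\halts = h(n)$ for every $n$.

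Next I would quantify the very-smallness of $P=[T]$, where $T$ is $f$-computable with $[T]$ equal to the given very small $\Pi_1^f$ class. A straightforward induction on $\ell$ yields $|T[\ell]|\leq 2^{k(\ell)}$, where $k(\ell)$ is the number of branching levels of $T$ strictly below $\ell$: at a non-branching level the extendible-node count is unchanged, and at a branching level it at most doubles. I then want to show $|T[m(n)]|\leq 2\varphi(n)$ for all sufficiently large $n$, which amounts to $b(\lceil\log_2\varphi(n)\rceil+1)\geq m(n)$ eventually, where $b$ is the branching level function of $T$. Defining $\lambda(n):=\lceil\log_2\varphi(n)\rceil+1$ and the computable envelope $\alpha(k):=\max\{m(n):\lambda(n)\leq k\}$ (finite because $\varphi$ is unbounded), the hypothesis that $b$ dominates every computable function, applied to $\alpha$, delivers the required inequality after an elementary monotonicity argument.

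For the trace itself, the set $T[\ell]$ of extendible nodes is only $\Pi_1^f$, so I would work with the $f$-computable nonincreasing approximation $T_s[\ell]:=\{\s\in T\cap 2^\ell : \s\text{ has an extension in }T\text{ of length }s\}$, whose limit in $s$ is $T[\ell]$. Let $s(n)$ be the least $s$ with $|T_s[m(n)]|\leq 2\varphi(n)$; by the previous paragraph, $s(n)$ exists for almost every $n$. Setting
\[
	D_{r(n)}:=\{\Psi^\s(n)[m(n)] : \s\in T_{s(n)}[m(n)] \text{ and } \Psi^\s(n)[m(n)]\halts\},
\]
the function $r$ is $f$-computable, $|D_{r(n)}|\leq 2\varphi(n)$, and $h(n)\in D_{r(n)}$ because $g\restrict m(n)$ lies in $T_s[m(n)]$ for every $s$ and $\Psi^{g\restrict m(n)}(n)[m(n)]\halts = h(n)$. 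To replace the bound $2\varphi(n)$ by $\varphi(n)$ I would rerun the construction with $\lfloor\varphi/2\rfloor$ in place of $\varphi$, and the finitely many small $n$ at which the construction fails can be handled by hard-coding values into $r$ without losing $f$-computability.

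The main obstacle I foresee is the coordination in the second paragraph: matching the computable modulus $m$ and the computable function $\lambda$ derived from $\varphi$ against the very-smallness of $b$ tightly enough to force the extendible-node count at level $m(n)$ below $2\varphi(n)$. Once that estimate is in place, the approximation step of the third paragraph is routine.
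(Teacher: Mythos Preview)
Your argument is correct, but it takes a different route from the paper. The paper first invokes the preceding lemma (which in turn cites a result from \cite{binns-thesis} about images of very small classes under total computable functionals): since $g$ is hyperimmune-free, any $h\leq_T g$ is itself an element of some very small $\Pi_1^f$ class $Q$, and then one simply reads off $h(n)$ from the at-most-$\varphi(n)$ extendible nodes of $Q$ at level $n+1$, using an $f$-computable approximation $\langle Q_k\rangle$ to find a stage where the count has dropped below $\varphi(n)$. You instead keep $g$ in the original class $P$, extract a computable use-and-time bound $m$ for the reduction $\Psi^g=h$ directly from hyperimmune-freeness, and trace $h(n)$ through $\{\Psi^\s(n):\s\in T_{s(n)}[m(n)]\}$. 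The paper's approach is more modular and gets the bound $\varphi(n)$ on the nose; yours is more self-contained (it does not need the image-of-very-small-classes lemma) at the cost of the constant-factor cleanup at the end and the explicit domination argument matching $m$ against $\varphi$ via $\alpha$. One small remark: with $\lambda(n)=\lceil\log_2\varphi(n)\rceil+1$ your bound on $|T[m(n)]|$ comes out as $2^{\lambda(n)}\leq 4\varphi(n)$ rather than $2\varphi(n)$, but your rerun-with-a-scaled-$\varphi$ fix absorbs this just as well.
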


		\begin{proof}

		Let $\varphi$ be any nondecreasing unbounded computable function and suppose $h\leq_T g$. By Lemma \ref{hif}, $h$ is a element of some very small $\Pi_1^f$ class $Q$. Let $\la Q_k \ra$ be an $f$-computable sequence of nested clopen sets such that $Q=\bigcap_k Q_k$.

		As $Q$ is  very small there is an $N$ such that  $|| Q[n+1] || \leq \varphi(n)$ for all $n\geq N$. And hence for each $n\geq N$ there is a $k$ such that  $|| Q_k[n+1] || \leq \varphi(n)$. Let $s=s(n)$ be the least such $k$ for each $n$. Now define an $f$-computable function $r(n)$ by
		\[
			D_{r(n)}=\{\s(n): \s\in Q_s[n+1]\}.
		\]
		It is straightforward now to see that clauses 1 and 2  of Definition \ref{strong} are satisfied for $h$ and $r$ and for cofinitely many $n$. We can then finitely adjust $r$ to satisfy 1 and 2 for all $n$.  As $h$ and $\varphi$ were arbitrary, it follows that $g$ is  computably traceable relative to $f$.

		\end{proof}

		The next lemma is based on a theorem by Robinson and Lachlan (\cite{rob67}, \cite{lach68}). The type of  construction is relatively well-known, but for completeness we include a sketch of the proof.

		\begin{lem}[Lachlan, Robinson]\label{lachrob}
		There is a co-c.e.~hyperimmune set $X\subs\om$ and a strictly increasing computable function $f$  such that for all co-c.e.~$Y\subs X$ there exists infinitely many $n\in\om$ such that $||\{m\in Y: m <   f(n)\}||\geq n$. That is $f$ dominates the principal function of  $Y$ infinitely often.
		\end{lem}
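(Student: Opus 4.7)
My plan is to build $X=\om\setminus W$ for a carefully constructed c.e.\ set $W$, together with a strictly increasing computable function $f$, by a movable-markers/finite-injury priority argument. Enumerate the partial computable functions as $\{\varphi_e\}$ and the c.e.\ sets as $\{V_e\}$. The requirements split into two families: for each $e$, the hyperimmunity requirement $P_e$ asks that $\varphi_e$ not dominate the principal function $p_X$, while the tracing requirement $R_e$ asks that if $V_e\sups W$ then there are infinitely many $n$ with $|\overline{V_e}\cap[0,f(n))|\geq n$. Together these deliver a hyperimmune co-c.e.\ $X$ with the claimed $f$-bounding property on every co-c.e.\ subset (since any such subset is $\overline{V_e}$ for some $e$ with $V_e\sups W$).

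For the $P_e$ strategies, I would use the standard movable-markers approach: maintain an approximation $a_0<a_1<a_2<\cdots$ to $X$ in stages; at suitable stages, if the current marker $a_n$ is not large enough to escape $\varphi_{e,s}(n)$, enumerate everything up to and including $a_n$ into $W$ and reset $a_n$ to a fresh larger value. Because markers may be pushed arbitrarily far to the right, $p_X$ eventually outruns every fixed $\varphi_e$ on infinitely many inputs, defeating all computable dominators. For the $R_e$ strategies, at each action stage pick a fresh witness $n$, choose $n$ elements $x_1<\cdots<x_n$ currently lying in $X_s\setminus V_{e,s}$, set $f(n)$ just above $x_n$, and reserve each $x_i$ against future enumeration into $W$; if $V_e$ later enumerates some $x_i$, abandon this $n$ and open a fresh larger witness. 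Since $R_e$ requires only infinitely many good $n$, finitely many abandonments per witness are tolerable; and if abandonments happen infinitely often, then $V_e$ is enumerating infinitely many elements we kept out of $W$, whence a routine counting argument shows that cofinitely many freshly opened witnesses eventually survive permanently.

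The main obstacle is the interaction between the two families: an action of $P_e$ enumerates small elements into $W$ and can destroy reserves of a lower-priority $R_{e'}$-strategy. This is handled by the standard priority ordering $P_0,R_0,P_1,R_1,\ldots$, under which each $R_{e'}$ suffers only finitely many injuries from higher-priority $P_e$ (each marker $a_n$ for $P_e$ stabilizes once it lies beyond $\varphi_e(n)$ or $\varphi_e(n)$ is seen to diverge), after which $R_{e'}$ settles down and produces its infinitely many successful witnesses. The function $f$ is built stage-by-stage, committing $f(s)$ to a value larger than all earlier commitments and all construction activity seen so far, which guarantees $f$ is computable, total, and strictly increasing. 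A final verification confirms that $W$ is c.e., that $X$ is hyperimmune from the $P_e$, and that the tracing property on every co-c.e.\ subset of $X$ follows from the $R_e$.
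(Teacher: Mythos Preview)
Your $R_e$ strategy has a genuine gap. Once $f(n)$ is committed (and it must be, at some finite stage, for $f$ to be computable), the adversary $V_e$ may enumerate any or all of your reserved elements below $f(n)$ while still keeping $\overline{V_e}$ infinite: it simply leaves one element of $X$ in each sufficiently long interval $[f(m),f(m+1))$, chosen well above everything you reserved at earlier stages. Your ``routine counting argument'' does not go through: infinitely many abandonments only tell you that $V_e\cap X$ is infinite, which is perfectly compatible with $\overline{V_e}$ being an infinite but extremely sparse subset of $X$ whose principal function dominates $f$. Nothing in your scheme forces infinitely many reservations to survive; at every fresh witness the same scenario can recur. (There is also an internal inconsistency: you describe $R_e$ as \emph{setting} $f(n)$ at its action stage, but later say $f(s)$ is set at stage $s$ independently of which $R_e$ acts; with infinitely many $R_e$'s competing for witnesses these cannot both hold.)

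The paper's proof sidesteps this by a density/halving trick. One fixes in advance a computable partition of $\omega$ into intervals $\alpha_i$ with $|\alpha_i|=i\cdot 2^{i+1}$ and sets $f(n)=\max\alpha_n$ once and for all---no interaction between $f$ and the requirements. During the construction one maintains shrinking $\beta_{i,s}\subseteq\alpha_i$, and $X$ is the union of the limits $\beta_{m_i}$ over movable markers $m_i$ (the markers handle hyperimmunity exactly as you propose for $P_e$). The crucial rule: whenever some $W_{j,s}$ with $j<i$ covers at least half of the current $\beta_{i,s}$ but not all of it, replace $\beta_{i,s}$ by $\beta_{i,s}\cap W_{j,s}$. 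Each $\beta_i$ is then halved at most $i$ times, leaving $|\beta_i|\geq 2i$. Now if $\overline{W_e}\subseteq X$ is infinite, there are infinitely many $i>e$ with $W_e\not\supseteq\beta_i$; for each such $i$ the halving rule forces $|W_e\cap\beta_i|<\tfrac12|\beta_i|$ (else a further halving would have occurred), hence $|\overline{W_e}\cap\beta_i|\geq i$. The point is that one never guesses which elements will land in $\overline{W_e}$; instead, any $W_e$ that fails to swallow $\beta_i$ entirely is \emph{automatically} forced to leave half of it uncovered. This dichotomy is the missing idea in your argument.
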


		\begin{proof}(Sketch)
			We begin by fixing a computable partition  of $\mathbb{N}$ into  intervals $\la \alpha_i \ra$ such that $||\alpha_i||= i\cdot2^{i+1}$ for each $i$, and such that $\max \alpha_i +1=\min \alpha_{i+1}$.  In the construction we produce a computable double sequence of finite sets $\la \beta_{i,s}\ra$ such that for each $i$ and each stage $s$,
			\begin{enumerate}
				\item $\beta_{i,0}=\alpha_i$,
				\item $\beta_{i,s+1}\subs\beta_{i,s}$,
			\end{enumerate}
			and we let $\beta_i=\lim_s \beta_{i,s}$.  We produce concurrently a double sequence of markers $\la m_{i,s}\ra$ such that for all $i$ and $s$,
			\begin{enumerate}
				\item $m_{i, 0}=i$,
				\item $m_{i, s}<m_{i+1, s}$,
				\item $m_{i, s+1}=m_{j, s}$ for some $j\geq i$.
				\item $\lim_t m_{i,t}$ exists.
			\end{enumerate}
			If $\lim_t m_{i,t}$ is denoted $m_i$, then the  set $X$ required by the theorem will be $\bigcup_{i\in\om} \beta_{m_i}$.

			The values of $m_{i,s}$ are determined to satisfy the requirement that $X$ be hyperimmune - at each stage $s$ if it appears that some approximation to a computable function is going to dominate the principal function of $X$, then the markers are moved to ensure that this fails to occur (for details on such {\em movable marker} arguments  see for example \cite{soare}).

			To determine the value of $\beta_{i,s+1}$ we find the least $j<i$ such that
			\begin{enumerate}
				\item $W_{j,s}\not\sups \beta_{i,s}$
				\item $||W_{j,s}\cap\beta_{i,s}||\geq\frac12||\beta_{i,s}||$
			\end{enumerate}
			and we let $\beta_{i,s+1}=\beta_{i,s}\cap W_{j,s}$ (where $W_{e,s}$ is some standard enumeration of the c.e.~sets). As $W_{j,s}$ is increasing in $s$ and $\beta_{i,s}$ is decreasing in $s$, $\beta_{i,t}\subs W_{j,t}$ for all $t>s$ and so for any $i$, $\beta_{i,s+1}\neq\beta_{i,s}$ at most $i$ times, and for each such $s$ $||\beta_{i,s+1}||\geq\frac12||\beta_{i,s}||$. Therefore, if $\beta_i=\lim_s\beta_{i,s}$, then $||\beta_i||\geq 2i$ for each $i$.

			To see that $X$ now has the required properties, first recall that the movable-marker construction makes it hyperimmune (the reduction of the $\beta_i$ over time cannot conflict with this requirement, and as $\beta_i$ is nonempty for all $i>0$, $X$ is infinite). Suppose now that $Y=\overline{W_e}$  is an infinite co-c.e.~subset of $X$. Therefore for infinitely many $i$, $W_e\not\sups\beta_i$. For each such $i>e$,
			\[
				||W_e\cap\beta_i||<\frac12||\beta_i||,
			\]
			(or else at some stage $\beta_i$ would have been reduced so that $\beta_i\subs W_e$).  So for infinitely many $i$
			\[
				||\overline{W_e}\cap \beta_i||\geq\frac12||\beta_i||\geq i.
			\]
			The function $f(n):=\max\alpha_n$ (which is independent of $e$) then dominates the principal function of $\overline{W_e}$ infinitely often.
		\end{proof}

		\begin{thm}
			\rca+VSMALL $ \not\vdash$ DIM.
		\end{thm}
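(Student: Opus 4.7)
The plan is to mimic the construction in Theorem~\ref{dimnotdnr}, but with very small trees in place of diminutive trees and computable traceability in place of complexity. I would dovetail to build a sequence $\omega=X_0,X_1,X_2,\ldots$ of reals of hyperimmune-free degree: at stage $\la m,n\ra$, if $T=\{n\}^{X_m}$ is an infinite $X_m$-computable tree that is very small relative to $X_{\la m,n\ra}$, apply Lemma~\ref{relhif} to obtain a path $Y\in[T]$ with $Y\oplus X_{\la m,n\ra}$ of hyperimmune-free degree, and set $X_{\la m,n\ra+1}=Y\oplus X_{\la m,n\ra}$; otherwise set $X_{\la m,n\ra+1}=X_{\la m,n\ra}$. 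The preceding lemma shows each such $Y$ is computably traceable relative to $X_{\la m,n\ra}$, and combined with the transitivity of computable traceability, every $X_i$ (and hence every real in the $\omega$-model $\mathfrak{M}$ with second-order part $\{Y:\exists i\,Y\leq_T X_i\}$) is computably traceable and of hyperimmune-free degree.

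To verify $\mathfrak{M}\models\text{VSMALL}$: any $T\in\mathfrak{M}$ whose branching-level principal function dominates every function in $\mathfrak{M}$ is of the form $\{n\}^{X_m}$, and since $X_{\la m,n\ra}\in\mathfrak{M}$ it is in particular very small relative to $X_{\la m,n\ra}$, so by construction a path through $T$ entered $\mathfrak{M}$ at stage $\la m,n\ra+1$.

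For the failure of DIM, I apply Lemma~\ref{lachrob} to obtain a co-c.e.~hyperimmune $X\subseteq\omega$, a computable $f$ dominating the principal function of every co-c.e.~subset of $X$ infinitely often, and the associated partition $\omega=\bigsqcup_i\alpha_i$ with $|\beta_i|\geq 2i$ for $\beta_i:=\alpha_i\cap X$. I then build a computable tree $T^*$ satisfying two tree-compatible requirements: $\sigma(n)=1$ is permitted only when $n$ has not yet been enumerated out of $X$, and every $\sigma$ long enough to span $\alpha_i$ must contain a $1$ in $\alpha_i$. The second requirement rules out trivial (eventually $0$) paths, and the tree is nonempty because selecting $\sigma(n)=1$ precisely at $n=\min\beta_i$ for each $i$ yields an infinite branch. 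The branching levels of $T^*$ form a co-c.e.~subset of $X$, so $f$ is a weak bounding witness and $T^*$ is not very small in $\mathfrak{M}$. Any strong bounding witness $g\in\mathfrak{M}$ for $T^*$, applied to a balanced extendible node of length $m$ (for instance a prefix of the concrete branch above), must satisfy $g(m)\geq\min(X\cap[m,\infty))$; iterating $g$ via $h(0)=g(0)$, $h(k+1)=g(h(k)+1)$ produces $h\geq p_X$ pointwise, and since $g$ is computably traceable it is dominated by a computable function, yielding a computable dominator of $p_X$ and contradicting the hyperimmunity of $X$. Hence DIM's hypothesis holds for $T^*$ in $\mathfrak{M}$.

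The main obstacle is verifying that $T^*$ has no path in $\mathfrak{M}$. For any path $P\in[T^*]$, the block-density requirement forces $Y:=\{n:P(n)=1\}$ to be infinite, and since $Y\subseteq X$, the principal functions satisfy $p_Y\geq p_X$ pointwise. If moreover $P\in\mathfrak{M}$, then $Y\leq_T P$ is computably traceable, so $p_Y$ is dominated by a computable function, and hence so is $p_X$, contradicting the hyperimmunity of $X$. The delicate part is to confirm that the block-density requirement can be superimposed on the $X$-branching condition without interfering with the strong-bounding-witness analysis; careful bookkeeping is needed to guarantee that balanced extendible nodes exist at every length, so that iterating any hypothetical $g$ still dominates $p_X$.
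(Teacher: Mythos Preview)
Your model construction (the first two paragraphs) is essentially the paper's: iterate the hyperimmune-free basis theorem over very small trees, and use the transitivity lemma to conclude every real in $\mathfrak{M}$ is computably traceable. That part is fine.

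The construction of the witness tree $T^*$, however, does not work, and the error is structural rather than cosmetic. You invoke Lemma~\ref{lachrob} and then write ``the associated partition $\omega=\bigsqcup_i\alpha_i$ with $|\beta_i|\geq 2i$ for $\beta_i:=\alpha_i\cap X$.'' But that is not what the lemma provides. In the proof of Lemma~\ref{lachrob}, $X=\bigcup_i\beta_{m_i}$ where the $m_i$ are the limits of the movable markers; for indices $j$ that are \emph{not} marker positions one has $\alpha_j\cap X=\varnothing$. Indeed this is exactly how $X$ is made hyperimmune: the markers are pushed out so that long runs of consecutive $\alpha_j$ contribute nothing to $X$. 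So your block-density requirement (``every $\sigma$ spanning $\alpha_i$ must contain a $1$ in $\alpha_i$'') is incompatible with the requirement that $1$'s occur only at positions in $X$: for any $j$ that is not a marker position, no string of length $>\max\alpha_j$ can lie in $T^*$. Hence $[T^*]=\varnothing$ and $T^*$ is finite.

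Even setting aside the misreading of the lemma, your ``no path in $\mathfrak{M}$'' argument proves too much. You argue: for any path $P$, the set $Y=\{n:P(n)=1\}$ is an infinite subset of $X$, so $p_Y\geq p_X$; if $P$ is traceable then $p_Y$ is computably dominated, hence so is $p_X$, contradicting hyperimmunity. But the block-density requirement already forces $p_Y(i)\leq\max\alpha_i$ for all $i$, which is a \emph{computable} bound independent of any traceability hypothesis. So the contradiction with hyperimmunity of $X$ fires for \emph{every} path, not just traceable ones, confirming again that $[T^*]=\varnothing$. Any computable density constraint you impose to rule out the trivial path will have this effect.

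The paper avoids this trap by taking a completely different witness: the separating class $S$ of a recursively inseparable partition $X_0\sqcup X_1=\overline{X}$. No density requirement is needed because $S$ has no computable member. The argument that $S$ has no computably traceable member then uses the \emph{full} strength of Lemma~\ref{lachrob} (that $f$ infinitely often dominates the principal function of every co-c.e.\ subset of $X$), a property your argument never invokes: from a hypothetical traceable $Z\in S$ one forms the subclass $S'=S\cap\{Y:\forall n\,Y\restriction f(n)\in D_n\}$, observes that its branching-level set is a co-c.e.\ subset of $X$, and derives a contradiction from the bound $||D_n||<n$. That co-c.e.-subset clause is the key idea you are missing; merely knowing $X$ is hyperimmune does not suffice.
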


		\begin{proof}
			We create a model $\mathfrak{M}$ of VSMALL by the same method as in Theorem \ref{dimnotdnr} using very small trees rather than diminutive trees.
			The lemmas above then give that every element of the second-order part of the model is computably traceable.
			We then produce using Lemma \ref{lachrob} a diminutive \pc that has no computably traceable element and hence no element in $\mathfrak{M}$.

			Let $X$ and $f$ be as in Lemma \ref{lachrob}. Let $X_0$ and $X_1$ be a c.e.~partition of $\overline{X}$ and let
			$S=\{Y: Y\sups X_0 \text{ and } Y\cap X_1=\emp\}$ - the separating set of $X_0$ and $X_1$.
			$S$ is a \pc and $X_0$ and $X_1$ can be chosen so that $S$ has no computable element (see \cite{ohashi64} Theorem 1).
			It is straightforward to show that $\{|\s|: \s\in\br(S)\}=X$ and that $S$ is diminutive  as $X$ is hyperimmune.

			Suppose now that $S$ has a computably traceable element $Z$ and consider the $Z$-computable function $n\mapsto Z\upharpoonright f(n)$.
			As $Z$ is computable traceable, there is a computable sequence of canonically indexed sets of binary strings $\la D_n\ra$ such that
			$Z\upharpoonright f(n)\in D_n$ and $||D_n||<n$ for all $n$. Now let
			\[
				S'=S\cap\{Y\in2^\om: \A n  Y\upharpoonright f(n)\in D_n\}.
			\]
			$S'$ is a \pc and contains $Z$ so is non-empty. $S'$ is perfect as $S$ has no computable element.
			Furthermore $W:=\{|\s|: \s\in\br(S')\}$ is a co-c.e. subset of $X$. But $||\{\s\in\ext(S'):  |\s|=f(n)\}||<n$ for all $n,$  so
			$||\{m\in W: m < f(n)\}||<n$, contradicting the assumptions on $X$ and $f$. Thus $S$ is a diminutive \pc with no computably traceable element.
		\end{proof}

\end{document}